\documentclass{article} 


 
 \usepackage[usenames,dvipsnames]{pstricks}
 \usepackage{epsfig}
 \usepackage{pst-grad} 
 \usepackage{pst-plot} 
 \usepackage[space]{grffile} 
 \usepackage{etoolbox} 
 \makeatletter 
 \patchcmd\Gread@eps{\@inputcheck#1 }{\@inputcheck"#1"\relax}{}{}
 \makeatother


\usepackage[usenames,dvipsnames]{pstricks}
\usepackage{epsfig}
\usepackage{pst-grad} 
\usepackage{pst-plot} 
\usepackage[space]{grffile} 
\usepackage{etoolbox} 
\makeatletter 
\patchcmd\Gread@eps{\@inputcheck#1 }{\@inputcheck"#1"\relax}{}{}
\makeatother

 \usepackage[usenames,dvipsnames]{pstricks}
 \usepackage{epsfig}
 \usepackage{pst-grad} 
 \usepackage{pst-plot} 
 \usepackage[space]{grffile} 
 \usepackage{etoolbox} 
 \makeatletter 
 \patchcmd\Gread@eps{\@inputcheck#1 }{\@inputcheck"#1"\relax}{}{}
 \makeatother

\usepackage{amsmath}
\usepackage{amssymb}
\usepackage{amsthm}
\usepackage[latin1]{inputenc}
     
\usepackage{graphicx}


\usepackage[usenames,dvipsnames]{pstricks}
\usepackage{epsfig}
\usepackage{pst-grad} 
\usepackage{pst-plot} 

\usepackage{ifthen}
\usepackage{color}


\newcommand{\intav}[1]{\mathchoice {\mathop{\vrule width 6pt height 3 pt depth  -2.5pt
\kern -8pt \intop}\nolimits_{\kern -6pt#1}} {\mathop{\vrule width
5pt height 3  pt depth -2.6pt \kern -6pt \intop}\nolimits_{#1}}
{\mathop{\vrule width 5pt height 3 pt depth -2.6pt \kern -6pt
\intop}\nolimits_{#1}} {\mathop{\vrule width 5pt height 3 pt depth
-2.6pt \kern -6pt \intop}\nolimits_{#1}}}

\def\polhk#1{\setbox0=\hbox{#1}{\ooalign{\hidewidth\lower1.5ex\hbox{`}\hidewidth\crcr\unhbox0}}}

\def\XXint#1#2#3{{\setbox0=\hbox{$#1{#2#3}{\int}$ }
\vcenter{\hbox{$#2#3$ }}\kern-.6\wd0}}

\renewcommand{\div}{\operatorname{div}}

\renewcommand{\div}{\operatorname{div}}

\newcommand{\dist}{\operatorname{dist}}

\newcommand{\Id}{\operatorname{Id}}

\newcommand{\vol}{\operatorname{vol}}

\usepackage{setspace}

\newtheorem{teo}{Theorem}
\newtheorem{example}{Example}
\newtheorem{Definition}{Definition}
\newtheorem{Lemma}{Lemma}
\newtheorem{Corollary}{Corollary}
\newtheorem{Proposition}{Proposition}
\newtheorem{Remark}{Remark}
\newtheorem{Assumption}{A}

\setstretch{1.2}

\makeatletter
\patchcmd{\env@cases}{1.2}{.8}{}{}
\makeatother

\begin{document}

\title{A fully nonlinear free transmission problem}
\author{Edgard A. Pimentel and Makson S. Santos}

\date{\today} 

\maketitle

\begin{abstract}

\noindent We examine a free transmission problem driven by fully nonlinear elliptic operators. Since the transmission interface is determined endogeneously, our analysis is two-fold: we study the regularity of the solutions and some geometric properties of the  free boundary. By relating our problem with a pair of viscosity inequalities, we prove that strong solutions are of class $C ^{1,1}$, locally. As regards the free boundary, we start by establishing weak results, such as its non-degeneracy, and proceed with the characterization of global solutions. 
\medskip

\noindent \textbf{Keywords}:  Free transmission problems; fully nonlinear operators; regularity of the solutions; properties of the free boundary.

\medskip 

\noindent \textbf{MSC(2020)}: 35B65; 35R35; 35J60.
\end{abstract}

\vspace{.1in}

\section{Introduction}\label{sec_introduction}

We consider a fully nonlinear transmission problem of the form
\begin{equation}\label{eq_main}
	F_1(D^2u)\chi_{\left\{u>0\right\}}+F_2(D^2u)\chi_{\left\{u<0\right\}}=1 \hspace{.3in} \mbox{in} \hspace{.3in} \Omega^-(u)\cup\Omega^+(u)
\end{equation}
where $F_1, F_2:\mathcal{S}(d)\to\mathbb{R}$ are $(\lambda,\Lambda)$-elliptic operators, $\Omega^-(u):=\{x\in B_1\mid u<0\}$, and $\Omega^+(u):=\{x\in B_1\mid u>0\}$. We prove optimal regularity results for the strong solutions to \eqref{eq_main} and examine the associated free boundary. In particular, we prove that solutions are locally of class $C ^{1,1}$ and establish non-degeneracy of the free interface. The latter unlocks the analysis of global solutions.

We emphasize the operators $F_1$ and $F_2$ are not supposed to satisfy any proximity regime, or even to be comparable. As a consequence, \eqref{eq_main} differs from the usual obstacle problem. In fact, discontinuities arise in the \emph{diffusion process}, as the solutions change sign. 

Transmission problems comprise a class of models aimed at examining a variety of phenomena in heterogeneous media. The problems under the scope of this formulation include thermal and electromagnetic conductivity, composite materials and, more generally, diffusion processes driven by discontinuous laws.

Given a domain $\Omega\subset\mathbb{R}^d$, it gets split into mutually disjoint subregions $\Omega_i\Subset\Omega$ for $i=1, \ldots, k$, for some $k\in\mathbb{N}$. The mechanism governing the problem is smooth within $\Omega_i$, though possibly discontinuous across $\partial\Omega_i$. A paramount, subtle, aspect of the theory concerns the nature of those subregions.

In fact, $\left(\Omega_i\right)_{i=1}^k$ and the geometry of $\partial\Omega_i$ can be prescribed a priori. The alternative is $\left(\Omega_i\right)_{i=1}^k$ to be determined endogenously. The latter setting frames the theory in the context of free boundary problems. Both cases differ substantially; as a consequence, their analysis also requires distinct techniques. The vast majority of former studies on transmission problems presupposes \emph{a priori knowledge} of the subregions $\Omega_i$ and their geometric properties. A work-horse of the theory is the divergence-form equation
\begin{equation}\label{eq_tm}
	\div\left(a(x)Du\right) = 0 \hspace{.2in} \mbox{in} \hspace{.2in} \Omega,
\end{equation}
where the matrix-valued function $a(\cdot)$ is defined as
\[
	a(x) := a_i \hspace{.1in} \mbox{for} \hspace{.1in} x \in \Omega_i,
\]
for constant matrices $a_i$ and $i=1, \ldots, k$. Though smooth within every $\Omega_i$, the coefficients of \eqref{eq_tm} can be discontinuous across $\partial\Omega_i$. This feature introduces genuine difficulties in the analysis. 

The first formulation of a transmission problem appeared in \cite{Picone1954} and addressed a topic in the realm of material sciences. More precisely, in elasticity theory. In that paper, the author proves the uniqueness of solutions for a model consisting of two subregions, which are known a priori. The  existence of solutions is discussed in \cite{Picone1954}, although not examined in detail. See also \cite{Picone1936}.

The formulation in \cite{Picone1954} motivated a number of subsequent studies \cite{Borsuk1968,Campanato1957,Campanato1959,Campanato1959a,Lions1955,Iliin-Shismarev1961,Oleinik1961,Schechter1960,Sheftel1963,Stampacchia1956}. Those papers present a wide range of developments, including the existence of solutions for the transmission problem in \cite{Picone1954} and the analysis of several variants. We refer the reader to \cite{Borsuk2010} for an account of those results and methods.

Estimates and regularity results for the solutions to transmission problems have also been treated in the literature. In \cite{Li-Vogelius2000} the authors consider a bounded subdomain $\Omega\subset\mathbb{R}^d$, which is split into a finite number of subregions $\Omega_1, \Omega_2, \ldots, \Omega_k$, known a priori. The motivation is in the study of composite materials with closely spaced inclusions. A two-dimensional example is the cross-section of a fiber-reinforced material; see Figure \ref{fig_frm}. The mathematical analysis amounts to the study of
\begin{equation}\label{eq_livogelius}
	\frac{\partial}{\partial x_i}\left(a(x)\frac{\partial}{\partial x_j}u\right) = f \hspace{.25in} \mbox{in} \hspace{.25in} \Omega,
\end{equation}
where 
\begin{equation*}
	a(x) := 
		\begin{cases}
			a_i(x)& \hspace{.3in}\mbox{for}\hspace{.3in}x\in \Omega_i,\;\;i=1, \ldots, k\\
			a_{k+1}(x)& \hspace{.3in}\mbox{for}\hspace{.3in}x\in \Omega\setminus\cup_{i=1}^k\Omega_i.
		\end{cases}
\end{equation*}

Under natural assumptions on the data, the authors establish local H\"older continuity for the gradient of the solutions. From the applied perspective, the gradient encodes information on the stress of the material. Their findings imply bounds on the gradient \emph{independent of the location of the fibers}. C.f. \cite{Bonnetier2000}.

\begin{figure}[h]
\centering

\psscalebox{.60 .60} 
{
\begin{pspicture}(0,-4.457367)(18.671059,4.457367)
\definecolor{colour0}{rgb}{0.8,0.8,0.8}
\definecolor{colour1}{rgb}{0.6,0.6,0.6}
\psframe[linecolor=black, linewidth=0.04, dimen=outer](8.929585,3.5630813)(4.272811,-1.093693)
\pscustom[linecolor=colour1, linewidth=0.1]
{
\newpath
\moveto(4.945456,0.82075864)
\lineto(4.900182,0.7108069)
\curveto(4.877545,0.6558313)(4.8387384,0.52971005)(4.822569,0.45856506)
\curveto(4.8063993,0.38742006)(4.774061,0.23219422)(4.7578917,0.1481134)
\curveto(4.7417226,0.06403259)(4.70615,-0.06855591)(4.6867466,-0.11706421)
\curveto(4.667343,-0.16557221)(4.596198,-0.26582214)(4.544456,-0.3175641)
\curveto(4.4927144,-0.36930603)(4.392464,-0.47602355)(4.3439565,-0.5309991)
\curveto(4.295448,-0.58597505)(4.2275367,-0.67975706)(4.2081337,-0.71856385)
\curveto(4.1887302,-0.7573706)(4.14669,-0.87378967)(4.1240525,-0.9514026)
\curveto(4.1014156,-1.0290155)(4.043206,-1.1454346)(4.0076337,-1.1842413)
\curveto(3.9720612,-1.2230481)(3.891214,-1.2941931)(3.8459399,-1.326532)
\curveto(3.8006659,-1.3588709)(3.677779,-1.41708)(3.600166,-1.4429511)
\curveto(3.5225532,-1.468822)(3.3834968,-1.5334997)(3.3220532,-1.5723059)
\curveto(3.2606096,-1.6111121)(3.176529,-1.6854913)(3.1538916,-1.7210639)
\curveto(3.1312547,-1.7566364)(3.0762787,-1.8374829)(3.04394,-1.8827575)
\curveto(3.0116014,-1.9280316)(2.9275208,-2.012112)(2.875779,-2.0509186)
\curveto(2.8240368,-2.0897253)(2.7334886,-2.1479347)(2.6946821,-2.1673377)
\curveto(2.6558754,-2.1867406)(2.587964,-2.225548)(2.5588593,-2.2449512)
\curveto(2.5297546,-2.2643542)(2.4359722,-2.2999268)(2.371295,-2.3160963)
\curveto(2.3066175,-2.3322656)(2.196666,-2.3516686)(2.1513917,-2.3549023)
}
\pscustom[linecolor=colour1, linewidth=0.1]
{
\newpath
\moveto(4.7643595,3.1620812)
\lineto(5.0295367,2.916307)
\curveto(5.162125,2.7934198)(5.44994,2.6284924)(5.605166,2.5864522)
\curveto(5.7603917,2.544412)(6.022335,2.4538636)(6.1290526,2.4053555)
\curveto(6.23577,2.3568473)(6.3780613,2.2695327)(6.413634,2.2307262)
\curveto(6.4492064,2.1919198)(6.465376,2.1046054)(6.445973,2.0560975)
\curveto(6.42657,2.0075893)(6.3554244,1.9008716)(6.3036823,1.8426617)
\curveto(6.2519403,1.784452)(6.1290526,1.6874359)(6.0579076,1.6486298)
\curveto(5.9867625,1.6098233)(5.844473,1.5322104)(5.773328,1.4934039)
\curveto(5.702183,1.4545975)(5.585763,1.3834522)(5.5404882,1.3511136)
\curveto(5.4952145,1.318775)(5.4240694,1.2637992)(5.398198,1.2411621)
\curveto(5.3723273,1.2185248)(5.330287,1.1603149)(5.3141174,1.1247424)
\curveto(5.297948,1.08917)(5.2365046,0.95011353)(5.1912303,0.8466296)
\curveto(5.1459565,0.74314576)(5.078045,0.56851685)(5.0554075,0.49737152)
\curveto(5.0327706,0.4262262)(5.0069,0.16751678)(5.003666,-0.020047607)
\curveto(5.000432,-0.2076123)(4.974561,-0.4436847)(4.951924,-0.49219298)
\curveto(4.929287,-0.540701)(4.8840127,-0.6215479)(4.861376,-0.65388644)
\curveto(4.8387384,-0.686225)(4.793464,-0.783241)(4.770827,-0.8479187)
\curveto(4.74819,-0.91259646)(4.7093835,-1.0225476)(4.693214,-1.0678222)
\curveto(4.677045,-1.1130964)(4.599432,-1.1874756)(4.537988,-1.2165802)
\curveto(4.476545,-1.2456849)(4.3730607,-1.2909595)(4.3310204,-1.3071289)
\curveto(4.28898,-1.3232983)(4.224303,-1.355636)(4.201666,-1.3718054)
\curveto(4.1790285,-1.3879749)(4.140222,-1.4623542)(4.1240525,-1.520564)
\curveto(4.1078835,-1.5787731)(4.082012,-1.6790234)(4.052908,-1.8051445)
}
\pscustom[linecolor=colour1, linewidth=0.1]
{
\newpath
\moveto(4.9583917,2.1013715)
\lineto(4.64794,1.9526135)
\curveto(4.4927144,1.8782343)(4.2566414,1.6162909)(4.1757946,1.4287262)
\curveto(4.0949483,1.2411618)(3.9979322,0.90160555)(3.9817626,0.74961364)
\curveto(3.9655933,0.59762144)(3.7844965,0.3227423)(3.619569,0.19985534)
\curveto(3.4546418,0.07696839)(3.176529,-0.0750238)(3.0633435,-0.10412842)
\curveto(2.950158,-0.13323334)(2.7528918,-0.15587036)(2.5006497,-0.13646716)
}
\pscustom[linecolor=colour1, linewidth=0.1]
{
\newpath
\moveto(2.3195531,-0.52453154)
\lineto(2.5588596,-0.56333804)
\curveto(2.6785126,-0.5827411)(3.0956821,-0.60861206)(3.3931983,-0.61507994)
\curveto(3.6907141,-0.6215479)(4.020569,-0.6021445)(4.052908,-0.5762735)
\curveto(4.0852466,-0.5504025)(4.172561,-0.45662048)(4.2275367,-0.3887091)
\curveto(4.2825127,-0.32079774)(4.3665934,-0.20114471)(4.395698,-0.14940277)
\curveto(4.424803,-0.097660825)(4.470077,0.005823059)(4.486246,0.057565004)
\curveto(4.5024157,0.10930695)(4.4927144,0.29040375)(4.466843,0.41975862)
\curveto(4.4409723,0.54911345)(4.392464,0.72697634)(4.3698273,0.7754846)
\curveto(4.34719,0.8239926)(4.337488,0.9274765)(4.350424,0.9824524)
\curveto(4.3633595,1.037428)(4.476545,1.1247424)(4.576795,1.1570812)
\curveto(4.677045,1.1894202)(4.9163513,1.2476296)(5.0554075,1.2735007)
\curveto(5.194464,1.2993716)(5.5825286,1.3252426)(5.831537,1.3252426)
\curveto(6.080545,1.3252426)(6.394231,1.3414117)(6.458908,1.3575811)
\curveto(6.5235853,1.3737506)(6.698214,1.4060894)(6.808166,1.4222589)
\curveto(6.9181175,1.4384284)(7.066875,1.4707665)(7.105682,1.4869361)
}
\pscustom[linecolor=colour1, linewidth=0.1]
{
\newpath
\moveto(7.558424,3.1103394)
\lineto(7.4096656,2.9486458)
\curveto(7.335287,2.867799)(7.2221007,2.6964037)(7.183295,2.6058555)
\curveto(7.1444883,2.515307)(7.0086656,2.136944)(6.9116497,1.8491297)
\curveto(6.814634,1.5613153)(6.627069,1.1926538)(6.536521,1.1118072)
\curveto(6.445973,1.0309604)(6.2745776,0.88867)(6.1937304,0.8272266)
\curveto(6.112883,0.7657831)(5.9964643,0.65259737)(5.9608917,0.6008554)
\curveto(5.925319,0.54911345)(5.8509398,0.4262262)(5.812134,0.35508117)
\curveto(5.773328,0.28393614)(5.6892467,0.1901538)(5.6439724,0.16751648)
\curveto(5.598698,0.14487946)(5.514618,0.08666992)(5.475811,0.05109741)
\curveto(5.4370046,0.015524902)(5.3432226,-0.08472534)(5.2882466,-0.14940277)
\curveto(5.23327,-0.2140802)(5.142722,-0.3531366)(5.1071496,-0.42751557)
\curveto(5.071577,-0.50189453)(5.0133677,-0.65712035)(4.990731,-0.7379669)
\curveto(4.9680934,-0.8188135)(4.929287,-0.9772736)(4.913118,-1.0548865)
\curveto(4.8969483,-1.1324993)(4.861376,-1.2456849)(4.8419724,-1.2812573)
\curveto(4.822569,-1.3168298)(4.774061,-1.3718054)(4.744956,-1.3912085)
}
\pscustom[linecolor=colour1, linewidth=0.1]
{
\newpath
\moveto(8.528585,2.2954037)
\lineto(8.095246,2.2177908)
\curveto(7.878577,2.1789844)(7.616634,2.094904)(7.5713596,2.0496297)
\curveto(7.5260854,2.0043554)(7.442004,1.8847022)(7.4031982,1.8103232)
\curveto(7.3643923,1.7359443)(7.302948,1.6033558)(7.280311,1.5451459)
\curveto(7.2576737,1.4869361)(7.2188673,1.344646)(7.2026978,1.2605652)
\curveto(7.186528,1.1764843)(7.1444883,1.0341944)(7.1186175,0.9759845)
\curveto(7.0927467,0.9177747)(7.018368,0.7884198)(6.9698596,0.7172748)
\curveto(6.9213514,0.6461298)(6.8372707,0.52971005)(6.801698,0.48443604)
\curveto(6.7661257,0.439162)(6.675577,0.34214568)(6.6206017,0.29040375)
\curveto(6.565626,0.23866181)(6.46861,0.15781493)(6.426569,0.12871033)
\curveto(6.3845286,0.09960541)(6.300448,0.05109741)(6.258408,0.031694032)
\curveto(6.2163677,0.012290649)(6.1225853,-0.023281861)(6.0708437,-0.039451294)
\curveto(6.0191016,-0.055620726)(5.922086,-0.08472534)(5.7862625,-0.12353168)
}
\psline[linecolor=black, linewidth=0.04](4.272811,3.5278027)(1.6857142,0.16457683)
\psline[linecolor=black, linewidth=0.04](8.917826,3.5278027)(6.3189692,0.17633636)
\psline[linecolor=black, linewidth=0.04](8.894307,-1.0819334)(6.330729,-4.4451594)
\psline[linecolor=black, linewidth=0.04](4.2845707,-1.093693)(1.6974738,-4.4451594)

\pscustom[linecolor=colour1, linewidth=0.1]
{
\newpath
\moveto(8.412167,-0.56451416)
\lineto(8.288691,-0.6159137)
\curveto(8.2269535,-0.6416135)(8.1505165,-0.71871275)(8.135817,-0.7701123)
\curveto(8.121117,-0.82151186)(8.091719,-0.9178857)(8.077019,-0.9628601)
\curveto(8.06232,-1.0078344)(8.024101,-1.0785096)(8.000583,-1.104209)
\curveto(7.977063,-1.1299084)(7.927085,-1.1780957)(7.900626,-1.2005835)
\curveto(7.8741674,-1.2230706)(7.821249,-1.2648327)(7.7947903,-1.2841077)
\curveto(7.7683315,-1.3033825)(7.7036543,-1.3387195)(7.665436,-1.3547815)
\curveto(7.6272173,-1.3708435)(7.4772835,-1.4029688)(7.3655677,-1.4190308)
\curveto(7.2538524,-1.4350928)(7.0656996,-1.4704303)(6.9892626,-1.4897052)
\curveto(6.912826,-1.5089802)(6.79229,-1.5571667)(6.7481923,-1.5860791)
\curveto(6.7040944,-1.6149914)(6.6276574,-1.6792407)(6.595319,-1.7145783)
\curveto(6.5629797,-1.7499157)(6.5041823,-1.8173773)(6.4777236,-1.849502)
\curveto(6.451265,-1.8816266)(6.4042263,-1.952301)(6.383647,-1.9908508)
\curveto(6.3630676,-2.0294006)(6.321909,-2.125774)(6.30133,-2.1835985)
\curveto(6.2807508,-2.2414234)(6.2454724,-2.3442223)(6.2307725,-2.3891969)
\curveto(6.216073,-2.4341712)(6.177855,-2.5369701)(6.154336,-2.594795)
\curveto(6.1308165,-2.6526196)(6.0837793,-2.7522058)(6.06026,-2.793968)
\curveto(6.036741,-2.83573)(5.983823,-2.919254)(5.954424,-2.9610162)
\curveto(5.925025,-3.0027783)(5.8662276,-3.0670276)(5.8368287,-3.0895154)
\curveto(5.80743,-3.1120026)(5.7456923,-3.1698267)(5.713353,-3.2051642)
\curveto(5.6810145,-3.2405016)(5.613397,-3.3176007)(5.578119,-3.3593628)
\curveto(5.5428405,-3.401125)(5.489922,-3.4782238)(5.472283,-3.5135608)
\curveto(5.4546437,-3.5488977)(5.4193654,-3.6292102)(5.4017262,-3.6741846)
\curveto(5.384087,-3.719159)(5.3546877,-3.802683)(5.3194094,-3.9183323)
}
\pscustom[linecolor=colour1, linewidth=0.1]
{
\newpath
\moveto(7.05982,-0.6350714)
\lineto(6.9657435,-0.7348755)
\curveto(6.9187055,-0.7847778)(6.833449,-0.8596301)(6.795231,-0.8845813)
\curveto(6.757012,-0.9095325)(6.6247168,-0.9500781)(6.530641,-0.9656726)
\curveto(6.436565,-0.9812671)(6.31015,-1.0249305)(6.277811,-1.0530005)
\curveto(6.2454724,-1.0810705)(6.1749153,-1.1808753)(6.136697,-1.2526093)
\curveto(6.0984783,-1.3243432)(6.036741,-1.4428613)(6.0132213,-1.4896442)
\curveto(5.989702,-1.536427)(5.9456043,-1.6175183)(5.925025,-1.6518261)
\curveto(5.9044456,-1.686134)(5.8603477,-1.7453918)(5.8368287,-1.7703431)
\curveto(5.813309,-1.7952942)(5.7486324,-1.8358399)(5.7074738,-1.8514344)
\curveto(5.666315,-1.8670288)(5.5604796,-1.9075738)(5.495802,-1.932525)
\curveto(5.431124,-1.9574761)(5.331168,-2.004259)(5.29589,-2.0260913)
\curveto(5.2606115,-2.0479236)(5.204754,-2.097826)(5.184175,-2.125896)
\curveto(5.1635957,-2.153966)(5.116557,-2.2537696)(5.0900984,-2.325504)
\curveto(5.0636396,-2.3972383)(5.004842,-2.5126367)(4.972503,-2.5563014)
\curveto(4.940164,-2.5999658)(4.875487,-2.6841748)(4.843148,-2.7247205)
\curveto(4.810809,-2.7652662)(4.7578917,-2.8401184)(4.737313,-2.8744264)
\curveto(4.7167335,-2.908734)(4.678515,-2.971112)(4.6608753,-2.9991822)
\curveto(4.643236,-3.0272522)(4.5873785,-3.1426501)(4.54916,-3.2299793)
\curveto(4.510942,-3.3173077)(4.449204,-3.4857268)(4.425685,-3.5668175)
\curveto(4.402166,-3.647909)(4.3639474,-3.7944958)(4.349248,-3.8599927)
\curveto(4.3345485,-3.925489)(4.2904506,-3.9909852)(4.202254,-3.9909854)
}
\pscustom[linecolor=colour1, linewidth=0.1]
{
\newpath
\moveto(5.9309053,-0.58803314)
\lineto(5.718153,-0.6334619)
\curveto(5.611777,-0.65617615)(5.4708996,-0.71012205)(5.4363995,-0.74135435)
\curveto(5.4018993,-0.77258664)(5.338649,-0.8520862)(5.309898,-0.900354)
\curveto(5.2811475,-0.9486218)(5.2380223,-1.0593543)(5.2236476,-1.1218182)
\curveto(5.2092724,-1.1842822)(5.1632714,-1.3092108)(5.131646,-1.3716748)
\curveto(5.100021,-1.4341394)(5.0310197,-1.5448712)(4.993644,-1.593139)
\curveto(4.956269,-1.6414069)(4.8987684,-1.7067102)(4.878643,-1.7237463)
\curveto(4.858518,-1.7407819)(4.795267,-1.7805316)(4.752142,-1.8032459)
\curveto(4.7090163,-1.82596)(4.6256404,-1.8742279)(4.58539,-1.8997815)
\curveto(4.5451393,-1.925335)(4.4847636,-1.9736029)(4.464638,-1.9963171)
\curveto(4.444513,-2.0190313)(4.407138,-2.0758166)(4.389888,-2.1098883)
\curveto(4.3726373,-2.14396)(4.338137,-2.2177808)(4.3208866,-2.2575305)
\curveto(4.3036366,-2.2972803)(4.266261,-2.362584)(4.2461357,-2.3881378)
\curveto(4.226011,-2.4136915)(4.17426,-2.4534411)(4.142635,-2.467638)
\curveto(4.1110096,-2.4818347)(4.059259,-2.510227)(4.0391335,-2.5244238)
\curveto(4.019008,-2.53862)(3.984508,-2.567013)(3.9701328,-2.5812092)
\curveto(3.9557574,-2.5954053)(3.8695068,-2.6237981)(3.797631,-2.6379943)
\curveto(3.725755,-2.6521912)(3.628004,-2.6805835)(3.602129,-2.6947803)
\curveto(3.5762537,-2.708977)(3.533128,-2.7515662)(3.5158777,-2.7799585)
\curveto(3.4986274,-2.8083515)(3.4641273,-2.8651373)(3.4468768,-2.8935304)
\curveto(3.4296267,-2.9219227)(3.3893766,-2.9985833)(3.3663764,-3.0468512)
\curveto(3.343376,-3.095119)(3.2916253,-3.1973329)(3.2628748,-3.2512794)
\curveto(3.2341244,-3.3052258)(3.1766238,-3.3989222)(3.1478736,-3.4386718)
\curveto(3.1191232,-3.4784217)(3.070248,-3.5494044)(3.0501227,-3.580636)
\curveto(3.0299976,-3.6118684)(2.9811218,-3.702726)(2.9523711,-3.7623506)
\curveto(2.923621,-3.821975)(2.8776205,-3.9099932)(2.8603704,-3.9383862)
\curveto(2.84312,-3.9667785)(2.8201199,-3.9951715)(2.8028698,-3.9951715)
}
\pscustom[linecolor=colour1, linewidth=0.1]
{
\newpath
\moveto(8.39923,0.63966185)
\lineto(8.192263,0.581452)
\curveto(8.0887785,0.5523474)(7.959424,0.4812024)(7.9335527,0.439162)
\curveto(7.907682,0.39712158)(7.859174,0.30980712)(7.836537,0.26453277)
\curveto(7.8139,0.21925843)(7.752456,0.115774535)(7.7136497,0.057565004)
\curveto(7.674844,0.0)(7.600464,-0.09119324)(7.5648913,-0.123531796)
\curveto(7.529319,-0.15587036)(7.380561,-0.17850769)(7.2673755,-0.16880615)
\curveto(7.15419,-0.15910462)(6.960158,-0.18174133)(6.8793116,-0.2140802)
\curveto(6.7984643,-0.24641907)(6.6723437,-0.3660721)(6.627069,-0.45338655)
\curveto(6.581795,-0.540701)(6.539755,-0.7735394)(6.5429883,-0.91906375)
\curveto(6.5462217,-1.0645881)(6.539755,-1.2521527)(6.530053,-1.2941931)
\curveto(6.5203514,-1.3362329)(6.4492064,-1.4138465)(6.3877625,-1.4494189)
\curveto(6.3263187,-1.4849914)(6.1743274,-1.5529028)(6.083779,-1.5852417)
\curveto(5.99323,-1.6175805)(5.7442226,-1.6660876)(5.585763,-1.682257)
\curveto(5.4273033,-1.6984265)(5.239739,-1.733999)(5.2106338,-1.7534021)
\curveto(5.181528,-1.7728052)(5.1330204,-1.8245472)(5.1136174,-1.856886)
\curveto(5.0942144,-1.8892249)(5.052174,-1.9636041)(5.0295367,-2.0056446)
\curveto(5.0069,-2.047685)(4.9648595,-2.1414673)(4.945456,-2.1932092)
\curveto(4.926053,-2.2449512)(4.9034157,-2.5651045)(4.900182,-2.8335156)
\curveto(4.8969483,-3.1019268)(4.8969483,-3.5223303)(4.9066496,-3.9783058)
}
\pscustom[linecolor=colour1, linewidth=0.1]
{
\newpath
\moveto(7.770226,-1.5777069)
\lineto(7.589129,-1.6617872)
\curveto(7.4985805,-1.7038275)(7.3659916,-1.8008435)(7.3239512,-1.8558197)
\curveto(7.281912,-1.9107959)(7.207533,-2.0433843)(7.1751943,-2.1209974)
\curveto(7.142855,-2.1986103)(7.0135,-2.4088116)(6.9164844,-2.5414002)
\curveto(6.8194675,-2.6739893)(6.622202,-2.8971264)(6.5219517,-2.9876745)
\curveto(6.4217014,-3.0782228)(6.2470727,-3.25932)(6.1726933,-3.349868)
\curveto(6.0983143,-3.4404163)(6.0045323,-3.6118116)(5.9851294,-3.6926587)
\curveto(5.9657264,-3.7735052)(5.923685,-3.8705213)(5.8557744,-3.9190295)
}
\psbezier[linecolor=black, linewidth=0.1, arrowsize=0.05291667cm 2.0,arrowlength=1.4,arrowinset=0.0]{<-}(13.314132,0.07095534)(11.267385,0.050926894)(11.287692,0.0780358)(9.414439,0.05806425652360076)
\psframe[linecolor=black, linewidth=0.04, dimen=outer](18.67106,1.871284)(14.014286,-2.7854903)
\psellipse[linecolor=colour1, linewidth=0.04, fillstyle=solid,fillcolor=colour1, dimen=outer](14.904285,1.1045098)(0.21,0.21)
\psellipse[linecolor=colour1, linewidth=0.04, fillstyle=solid,fillcolor=colour1, dimen=outer](15.644286,0.22450979)(0.15,0.21)
\psellipse[linecolor=colour1, linewidth=0.04, fillstyle=solid,fillcolor=colour1, dimen=outer](16.754286,0.8245098)(0.22,0.13)
\psellipse[linecolor=colour1, linewidth=0.04, fillstyle=solid,fillcolor=colour1, dimen=outer](16.884285,0.084509805)(0.15,0.17)
\psellipse[linecolor=colour1, linewidth=0.04, fillstyle=solid,fillcolor=colour1, dimen=outer](17.164286,-0.7854902)(0.13,0.18)
\psellipse[linecolor=colour1, linewidth=0.04, fillstyle=solid,fillcolor=colour1, dimen=outer](18.024286,-2.3954902)(0.19,0.13)
\psellipse[linecolor=colour1, linewidth=0.04, fillstyle=solid,fillcolor=colour1, dimen=outer](17.284286,-2.4254904)(0.13,0.2)
\psellipse[linecolor=colour1, linewidth=0.04, fillstyle=solid,fillcolor=colour1, dimen=outer](16.514286,-2.4354904)(0.22,0.09)
\psellipse[linecolor=colour1, linewidth=0.04, fillstyle=solid,fillcolor=colour1, dimen=outer](15.444286,-2.2554903)(0.23,0.17)
\psellipse[linecolor=colour1, linewidth=0.04, fillstyle=solid,fillcolor=colour1, dimen=outer](14.764286,-2.3454902)(0.17,0.18)
\psellipse[linecolor=colour1, linewidth=0.04, fillstyle=solid,fillcolor=colour1, dimen=outer](15.024286,-1.2454902)(0.21,0.22)
\psellipse[linecolor=colour1, linewidth=0.04, fillstyle=solid,fillcolor=colour1, dimen=outer](16.204287,-1.2054902)(0.17,0.18)
\psellipse[linecolor=colour1, linewidth=0.04, fillstyle=solid,fillcolor=colour1, dimen=outer](18.084286,0.7445098)(0.17,0.17)
\psellipse[linecolor=colour1, linewidth=0.04, fillstyle=solid,fillcolor=colour1, dimen=outer](17.434286,1.3145097)(0.22,0.2)
\psframe[linecolor=black, linewidth=0.04, dimen=outer](6.3424883,0.19985542)(1.6857142,-4.4569187)
\end{pspicture}
}

\caption{The cross-section of a fiber-reinforced material provides an example in $\mathbb{R}^2$ of a bounded domain with a finite number of inclusions. The grey subregions in the cross-section represent the fibers, whereas the remainder of the material is the matrix.}\label{fig_frm}
\end{figure}
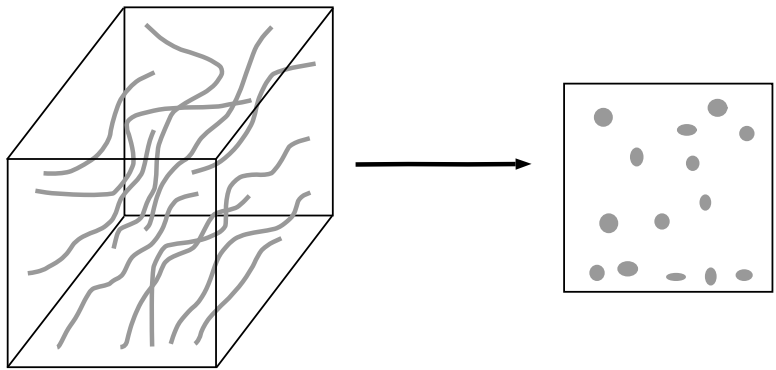

The vectorial setting is the subject of \cite{Li-Nirenberg2003}. In that paper the authors extend the developments reported in \cite{Li-Vogelius2000} to systems. Moreover, they produce bounds for higher derivatives of the solutions.

In \cite{Bao-Li-Yin2009} the authors consider a domain with two subregions, which are supposed to be $\varepsilon$-apart, for some $\varepsilon>0$. Within each subregion, the divergence-form equation is governed by a constant coefficient $k$. Conversely, outside those subregions the diffusivity coefficient is equal to $1$. By setting $k=+\infty$, the authors frame the problem in the context of perfect conductivity. 

In this setting, it is known that bounds on the gradient deteriorate as the two subregions approach each other. The analysis in \cite{Bao-Li-Yin2009} yields blow up rates for the gradient bounds as $\varepsilon\to 0$. The case of multiple inclusions, covering perfect conductivity and insulation $(k=0)$, is discussed in \cite{Bao-Li-Yin2010}. See also \cite{Briane-Capdeboscq-Nguyen2013}.

Recently, new developments have been obtained under minimal regularity requirements for the transmission interfaces. In \cite{Caffarelli-Carro-Stinga2020} the authors consider a smooth and bounded domain $\Omega$ and fix $\Omega_1\Subset\Omega$, defining $\Omega_2:=\Omega\setminus\overline{\Omega}_1$. They suppose the boundary of the transmission interface $\partial\Omega_1$ to be of class $C ^{1,\alpha}$ and prove existence, uniqueness and $C ^{1,\alpha}(\overline{\Omega}_i)$-regularity of the solutions to the problem, for $i=1, 2$. Their argument imports regularity from flat problems, through a new stability result; see \cite[Theorem 4.2]{Caffarelli-Carro-Stinga2020}. 

Another class of transmission problems concerns models where the subregions of interest are determined endogenously. For example, given $\Omega\subset\mathbb{R}^d$, one would consider
\[
	\Omega_1 : \left\{x \in \Omega \mid u(x) < 0\right\} \hspace{.3in} \mbox{and} \hspace{.3in} \Omega_2 : \left\{x \in \Omega \mid u(x) > 0\right\},
\]
where $u:\Omega\to\mathbb{R}$ solves a prescribed equation. Roughly speaking, knowledge of the solution is required to determine the subregions of the domain where distinct diffusion phenomena take place. In this context, a further structure arises, namely, the free interface, or free boundary. Here, in addition to the analysis of the solutions, properties of the free boundary are also of central interest. 

In \cite{Amaral-Teixeira2015} the authors examine a transmission problem with free interface. They consider the functional
\begin{equation}\label{eq_func}
	I(v) := \int_{\Omega}\frac{1}{2}\left\langle A(x,v)Dv, Dv\right\rangle + \Lambda(v) + fv dx,
\end{equation}
where
\[
	A(x,u) := A_+(x)\chi_{\left\lbrace u>0\right\rbrace} + A_-(x)\chi_{\left\lbrace u\leq0\right\rbrace},
\]
\[
	\Lambda(u) := \lambda_+(x)\chi_{\left\lbrace u>0\right\rbrace} + \lambda_-(x)\chi_{\left\lbrace u\leq0\right\rbrace},
\]
and
\[
f := f_+(x)\chi_{\left\lbrace u>0\right\rbrace} + f_-(x)\chi_{\left\lbrace u\leq0\right\rbrace}
\]
with $A_\pm$ matrix-valued mappings and $\lambda_\pm$ and $f_\pm$ given functions. Local minimizers for \eqref{eq_func} satisfy 
\[
	\begin{split}
		\div\left(A_+(x)Du\right) = f_+& \hspace{.3in} \mbox{in} \hspace{.3in} \Omega_+ := \left\{u>0\right\},\\
		\div\left(A_-(x)Du\right) = f_-& \hspace{.3in} \mbox{in} \hspace{.3in} \Omega_- := \left\{u<0\right\}^\circ,
	\end{split}
\]
while Hadamard's-type of arguments yield a flux condition across the free interface $F(u):=\partial \Omega_+\cap\Omega$, depending on $\lambda_+$ and $\lambda_-$. The authors prove the existence of minimizers, with $L^\infty$-bounds. In fact the proof of existence bypasses the lack of convexity of the functional and yields estimates in $L^\infty$ as a by-product. Those local minima are proved to have a local modulus of continuity. Under the assumption that $A_+$ and $A_-$ are close, in a sense made precise in that paper, the authors prove that solutions are indeed asymptotically Lipschitz. 

The problem examined in \cite{Amaral-Teixeira2015} profits from the existence of an associated functional and the properties derived for its minima. We remark those structures are not available in the context of \eqref{eq_main}.

In the present paper we study $W^{2,d}$-strong solutions to \eqref{eq_main}. We start by noticing that a $W^{2,d}$-solution to \eqref{eq_main} is a continuous viscosity solution to
\[
	\min\left(F_1(D^2u),F_2(D^2u)\right)\leq 1\hspace{.4in}\mbox{in}\hspace{.1in}B_1
\]
and
\[
	\max\left(F_1(D^2u),F_2(D^2u)\right)\geq -1\hspace{.4in}\mbox{in}\hspace{.1in}B_1.
\]

In addition, by requiring both $F_1$ and $F_2$ to be convex and supposing they are positively homogeneous of degree one, we produce quadratic growth for the solutions away from the free boundary. This result follows from a dyadic analysis, combined with the maximum principle. The argument relies on a scaling strategy, using the $L^\infty$-norms of the solutions as a normalization factor. This machinery was introduced in \cite{Caffarelli-Karp-Shahgholian2000} in the context of an obstacle problem driven by the Laplacian. In \cite{Lee-Shahgholian2001} the authors took this perspective to the fully nonlinear setting and developed a fairly complete analysis of the obstacle problem governed by fully nonlinear equations. We also refer the reader to \cite{Lee1998}.

The quadratic growth results developed in \cite{Caffarelli-Karp-Shahgholian2000} and \cite{Lee-Shahgholian2001} rely on a smallness condition on the density of the region where solutions are negative. Our argument resorts to a similar assumption. In fact, we consider the quantity
\begin{equation*}\label{eq_volume0}
	V_r(x^*,u) := \frac{\vol\left(B_r(x^*)\cap \Omega^-(u)\right)}{r^d};
\end{equation*}
by supposing $V_r(x,u)$ is controlled for every $x\in\partial(\Omega^+(u)\cup\Omega^-(u))\cap B_{1/2}$, we are capable of proving quadratic growth for the solutions, away from the free boundary. A further scaling argument -- depending on the square of the distance to the free boundary -- is capable of relating $B_1$ with each connected component associated with the transmission problem. This fact extrapolates regularity information for $u$; namely, we prove that strong solutions to \eqref{eq_main} are of class $C ^{1,1}$ in $B_{1/2}$. This is the content of our first main result.

\begin{teo}[Regularity of the solutions]\label{teo_main1}
Let $u\in W_{\rm loc}^{2,d}(B_1)$ be a strong solution to \eqref{eq_main}. Suppose A\ref{assump_ellipticity}-A\ref{assump_smalldensity}, to be detailed further, hold true. Then, $u\in C ^{1,1}_{\rm loc}(B_1)$ and there exists a universal constant $C>0$ such that 
\[
	\left\|D^2u\right\|_{L^\infty(B_{1/2})} \leq C.
\]	
\end{teo}

We observe that $F_1$ and $F_2$ are not required to be close, or even comparable, in any topoloy. After examining the regularity of the solutions, we turn our attention to the free transmission interface. Set $\Omega:=\Omega^+(u)\cup\Omega^-(u)$. At this point, an alternative arises. In fact, we can either suppose $\{Du\neq0\}\subset\Omega$ or $\{Du\neq0\}\not\subset\Omega$. In the former case, we are capable of producing a characterization of the global solutions to \eqref{eq_main}. 

We start our analysis by supposing $\{Du\neq0\}\subset\Omega$ and establishing a non-degeneracy result. Very much based on the maximum principle, it follows along the same lines put forward in \cite{Lee-Shahgholian2001} and \cite{Figalli-Shahgholian_2014}. The non-degeneracy property combines with Theorem \ref{teo_main1} to control quadratically the growth of the solutions from above and from below.

A further consequence of non-degeneracy concerns global solutions to \eqref{eq_main}; it relies on a condition concerning the thickness of the free boundary. For $A\subset\mathbb{R}^d$, denote with ${\rm MD}(A)$ the smallest distance between two hyperplanes enclosing $A$. Our second main result reads as follows.

\begin{teo}[Characterization of global solutions]\label{prop_hspac}
Let $u\in W_{\rm loc}^{2,d}(\mathbb{R}^d)$ be a strong solution to \eqref{eq_main} in $\mathbb{R}^d$. Suppose A\ref{assump_ellipticity}-A\ref{assump_smalldensity}, to be detailed below, are in force. Suppose further there exists $\varepsilon_0>0$, to be determined, such that
\[
	\frac{{\rm MD}\left((B_1\setminus\Omega)\cap B_r(x)\right)}{r} > \varepsilon_0,
\]
for $0<r\ll 1$ and $x\in\partial\Omega$. Then $u$ is a half-space solution. That is, up to a rotation, 
\[
	u(x) = \frac{\gamma[(x_1)_+]^2}{2} + C,
\]
where $C\in\mathbb{R}$ and $\gamma \in (1/\Lambda, 1/\lambda)$ is such that either $F_1(\gamma e_1\otimes e_1) = 1$ or $F_2(\gamma e_1\otimes e_1)=1$.
\end{teo}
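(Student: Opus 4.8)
The plan is to follow the classical blow-up/monotonicity-free scheme for the obstacle problem (in the spirit of \cite{Caffarelli-Karp-Shahgholian2000} and \cite{Lee-Shahgholian2001}), adapted to the two-phase fully nonlinear setting of \eqref{eq_main}, and then to exploit the thickness hypothesis on $B_1\setminus\Omega$ together with the condition $\{Du\neq 0\}\subset\Omega$ to rule out every global solution except the half-space one. First I would fix $x_0\in\partial\Omega$ and, using Theorem \ref{teo_main1} (which applies on every ball after scaling, since $u$ is an entire solution) together with the non-degeneracy stated just above the theorem, obtain the two-sided quadratic bound
\[
	c\,r^2 \;\leq\; \sup_{B_r(x_0)} |u| \;\leq\; C\,r^2 \qquad \text{for all } r>0,
\]
with $c,C$ universal. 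This linear-growth control on $|D^2u|$ at infinity is exactly what makes blow-downs compact: setting $u_r(x):=u(x_0+rx)/r^2$, the family $\{u_r\}$ is bounded in $C^{1,1}_{\rm loc}$, so along a subsequence $u_r\to u_\infty$ in $C^{1,\alpha}_{\rm loc}$, and $u_\infty$ is again a nonzero global $W^{2,d}_{\rm loc}$ strong solution to \eqref{eq_main} with $0\in\partial\Omega(u_\infty)$, still satisfying the quadratic non-degeneracy.

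The heart of the matter is to show that $\Omega(u_\infty)^c$ — the coincidence set of the blow-down — is a half-space. Here I would use the thickness assumption: ${\rm MD}\big((B_1\setminus\Omega)\cap B_r(x)\big)/r>\varepsilon_0$ passes to the blow-down limit (minimal diameter is scale-invariant and lower semicontinuous under the Hausdorff convergence of the complements, which follows from the uniform non-degeneracy), so the free boundary of $u_\infty$ is not contained, at any scale, in a thin slab. Combined with the fact that $u_\infty$ solves a \emph{concave} equation ($F_1,F_2$ convex) in each phase and $\{Du_\infty\neq 0\}\subset\Omega(u_\infty)$ — so that $u_\infty$ and $Du_\infty$ vanish on the whole coincidence set, which therefore behaves like the contact set of a genuine obstacle problem — one invokes the John–Nirenberg / dimension-reduction type argument of \cite{Caffarelli-Karp-Shahgholian2000}, \cite{Lee-Shahgholian2001} (see also \cite{Figalli-Shahgholian_2014}): directional second derivatives $\partial_{ee}u_\infty\geq 0$ are obtained from convexity plus the quadratic bound, the set $\{\partial_{ee}u_\infty=0\}$ is shown to be a subspace, and the thickness condition forces $\Omega(u_\infty)^c$ to be exactly a half-space $\{x_1\le 0\}$ after a rotation. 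On that half-space $u_\infty=0$, hence $u_\infty(x)=\tfrac{\gamma}{2}[(x_1)_+]^2$ for some $\gamma>0$, and plugging into \eqref{eq_main} on $\{x_1>0\}$ gives either $F_1(\gamma\,e_1\otimes e_1)=1$ or $F_2(\gamma\,e_1\otimes e_1)=1$, whence uniform ellipticity pins $\gamma\in(1/\Lambda,1/\lambda)$.

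It remains to pass from the blow-down $u_\infty$ back to $u$ itself. Since $\{Du\neq 0\}\subset\Omega$, the coincidence set $B_1\setminus\Omega$ of $u$ is contained in $\{Du=0\}$, and on it $u$ is locally constant; the standard argument then shows that the blow-down is independent of the subsequence (the "directional monotonicity" $\partial_e u\geq 0$ for $e$ in a half-space is inherited by $u$ from $u_\infty$ because non-degeneracy prevents the contact set from collapsing), so in fact $u$ itself, up to the additive constant $C$, must coincide with the half-space profile — any deviation would produce, after a further blow-down, a global solution with a \emph{different} contact direction, contradicting uniqueness of the blow-down. One then reads off $u(x)=\tfrac{\gamma[(x_1)_+]^2}{2}+C$.

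I expect the main obstacle to be the dimension-reduction step establishing that the coincidence set of the blow-down is genuinely a half-space rather than merely a convex set with nonempty interior: the usual proof for the Laplacian obstacle problem uses the linear structure to differentiate the equation and to run a monotonicity formula, neither of which is available here. The workaround is to stay at the level of viscosity sub/supersolutions — using that $\partial_{ee}u_\infty$ is a subsolution of a linear uniformly elliptic equation (by convexity of $F_i$ together with the $C^{1,1}$ regularity), applying the strong maximum principle on each phase, and using the two-phase structure only through the sign of $u_\infty$ — exactly as in \cite{Lee-Shahgholian2001}; the thickness hypothesis is precisely the ingredient that closes the argument by excluding the "thin" degenerate alternative.
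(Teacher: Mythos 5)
Your blow-down skeleton is broadly the same as the paper's, but two of its load-bearing steps are not actually carried out, and one of them is asserted on incorrect grounds. First, the convexity of the solution. You claim $\partial_{ee}u_\infty\geq 0$ ``from convexity plus the quadratic bound''; convexity of the operators $F_1,F_2$ does not give convexity of $u$, and in the paper this is a separate, substantial result (Proposition \ref{prop_conv}): one assumes $-m:=\inf_{z,e}\partial_{ee}u<0$, blows up along a minimizing sequence at scale $d_n=\dist(y_n,\partial\Omega)$, uses Evans--Krylov in the pure phases, propagates the minimum of $\partial_{11}u_\infty$ by the strong \emph{minimum} principle (for convex $F_i$ the pure second derivatives are \emph{supersolutions} of the linearized equation, not subsolutions as you write), and classifies the limit as either a global quadratic polynomial or the profile $-m x_1^2/2+ax_1+b$; the thickness hypothesis enters exactly here, via Proposition \ref{prop_thick}, to exclude the polynomial alternative. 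In your write-up the thickness is instead invoked to force the coincidence set of the blow-down to be a half-space, and the intermediate claim that ``$\{\partial_{ee}u_\infty=0\}$ is a subspace'' forces a half-space is not developed; the paper obtains the half-space property of $\Sigma(u_\infty)$ by a different mechanism, namely a barrier comparison: if the convex cone $\Sigma(u_\infty)$ were contained in a cone of opening $\theta_0>\pi/2$, the homogeneous barrier $r^\alpha(e^{-\beta\sin(\alpha\theta)}-e^{-\beta})$ with $\alpha=\pi/\theta_1\in(1,2)$ would sit below $\partial_1u_\infty$, contradicting the linear growth of the gradient coming from the global $C^{1,1}$ bound.

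Second, and more seriously, your passage from the blow-down back to $u$ itself is a genuine gap. Asserting that the blow-down is unique and that ``any deviation would produce, after a further blow-down, a global solution with a different contact direction'' does not prove that $u$ coincides with its blow-down; even in the classical obstacle problem, the statement that a global solution whose blow-down is a half-space solution is itself a half-space solution requires a rigidity argument, not just uniqueness of the limit. The paper closes this step directly: once $u$ is known to be convex and $\Sigma(u_\infty)$ is a half-space, $\Sigma(u)$ is itself a half-space; then global boundary $C^{2,\alpha}$ estimates \cite{Silvestre-Sirakov2014} applied to $u$ on $B_r\setminus\Sigma(u)$ give
\[
	\sup_{x,z\in B_r\setminus\Sigma(u)}\frac{|D^2u(x)-D^2u(z)|}{|x-z|^\alpha}\leq \frac{C}{r^\alpha},
\]
and letting $r\to\infty$ shows $D^2u$ is constant, so $u$ is a quadratic polynomial in the half-space $\mathbb{R}^d\setminus\Sigma(u)$; the hypothesis $\{Du\neq0\}\subset\Omega$ then forces $Du=0$ on the hyperplane $\partial\Sigma(u)$, which pins down the profile $\gamma[(x_1)_+]^2/2+C$ with $F_1(\gamma e_1\otimes e_1)=1$ or $F_2(\gamma e_1\otimes e_1)=1$ and $\gamma\in(1/\Lambda,1/\lambda)$. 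Without an argument of this Liouville type (or a genuine substitute), your proof does not reach the stated conclusion for $u$, only for its blow-down.
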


%


We mention that a complete result on the regularity of the free boundary as well as the analysis of singular sets are off the scope of this paper; we refer the reader to \cite{Lee-Shahgholian2001} for an account of the free boundary regularity in the fully nonlinear setting. For the analysis of the singular set in the context of fully nonlinear problems, we mention \cite{Savin-Yu2019}.

The remainder of this paper is organized as follows: Section \ref{sec_mafp} gathers elementary results and details the main assumptions under which we work. In Section \ref{sec_regsol} we study the regularity of the strong solutions to \eqref{eq_main} and present the proof of Theorem \ref{teo_main1}. A fourth section reports a preliminary analysis of the free boundary, establishing the non-degeneracy property and proving Theorem \ref{prop_hspac}. 

\section{Preliminaries}\label{sec_mafp}

This section presents some preliminary material, as well as the main hypotheses we use in the paper. With $\mathcal{S}(d)$ we denote the space of symmetric matrices of order $d$; when convenient, we identify  $\mathcal{S}(d)\sim\mathbb{R}^\frac{d(d+1)}{2}$. We start with the uniform ellipticity of the operators $F_i$.

\begin{Assumption}[Uniform ellipticity]\label{assump_ellipticity}
For $i=1, 2$, we suppose the operator $F_i:\mathcal{S}(d)\to \mathbb{R}$  to be $(\lambda, \Lambda)$-uniformly elliptic. That is, for $0<\lambda\leq \Lambda$, it holds
\[
	\lambda\|N\| \leq F_i(M + N) - F_i(M) \leq \Lambda\|N\|,
\]
for every $M, N\in\mathcal{S}(d)$, $N\geq 0$, and $i=1, 2$. We also suppose $F_i(0)=0$.
\end{Assumption}

Uniform ellipticity relates closely with the extremal operators
\[
	\mathcal{M}^+_{\lambda,\Lambda}(M):=\Lambda\sum_{e_i>0}e_i+\lambda\sum_{e_i<0}e_i
\]
and
\[
	\mathcal{M}^-_{\lambda,\Lambda}(M):=\lambda\sum_{e_i>0}e_i+\Lambda\sum_{e_i<0}e_i.
\]
In fact, Assumption A\ref{assump_ellipticity} can be rephrased as
\[
	\mathcal{M}_{\lambda,\Lambda}^-(M-N)\leq F_i(M)-F_i(N)\leq\mathcal{M}^+_{\lambda,\Lambda}(M-N),
\]
for every $M,N\in\mathcal{S}(d)$, and $i=1,2$. For completeness, we recall the definition of viscosity solutions.

\begin{Definition}[$C$-viscosity solution]\label{def_viscsol}Let $G:\mathcal{S}(d)\to\mathbb{R}$ be a $(\lambda,\Lambda)$-elliptic operator.  We say that $u\in{\rm USC}(B_1)$ is a $C$-viscosity subsolution to 
\begin{equation}\label{eq_defvisc}
	G(D^2u) = 0\hspace{.4in}\mbox{in}\hspace{.1in}B_1
\end{equation}
if, for every $\varphi\in C^{2}_{\rm loc}(B_1)$ and $x_0\in B_1$, such that $u-\varphi$ attains a local maximum at $x_0$, we have
\[
	G(D^2\varphi(x_0)) \leq 0.
\]
Similarly, we say that $u\in{\rm LSC}(B_1)$ is a $C$-viscosity supersolution to \eqref{eq_defvisc} if, for every $\varphi\in C^{2}_{\rm loc}(B_1)$ and $x_0\in B_1$, such that $u-\varphi$ attains a local minimum at $x_0$, we have
\[
	G(D^2\varphi(x_0)) \geq 0.
\]
If $u\in C(B_1)$ is simultaneously a subsolution and a supersolution to \eqref{eq_defvisc}, we say it is a viscosity solution to the equation.
\end{Definition}

For $0<\lambda\leq\Lambda$ and $f\in C(B_1)$, we define $\overline{S}(\lambda,\Lambda,f)$ as the set of functions $u\in C(B_1)$ satisfying
\[
	\mathcal{M}^-_{\lambda,\Lambda}(D^2u)\leq f
\]
in $B_1$, in the viscosity sense. Similarly, $\underline{S}(\lambda,\Lambda,f)$ is the set of functions $u\in C(B_1)$ satisfying
\[
	\mathcal{M}^+_{\lambda,\Lambda}(D^2u)\geq f.
\]
Finally, we set
\[
	S(\lambda,\Lambda,f):=\overline{S}(\lambda,\Lambda,f)\cap\underline{S}(\lambda,\Lambda,f)
\]	
and
\[
	S^*(\lambda,\Lambda,f):=\overline{S}(\lambda,\Lambda,-|f|)\cap\underline{S}(\lambda,\Lambda,|f|).
\]	

For a comprehensive account of the theory of $C$-viscosity solutions, we refer the reader to \cite{CafCab}. When examining global solutions to \eqref{eq_main}, we also resort to the notion of $L^d$-viscosity solution; see \cite{CCKS96}. We proceed with the definition of strong solution.

\begin{Definition}[$W^{2,d}$-strong solution]\label{def_strongsol}
We say that $u\in W^{2,d}_{\rm loc}(B_1)$ is a strong solution to
\[
	G(D^2u(x)) = 0\hspace{.4in}\mbox{in}\hspace{.1in}B_1
\]
if $u$ satisfies the equation at almost every $x\in B_1$.
\end{Definition}

We refer the reader to \cite[Chapter 9]{GilbargTrudinger} for further details on this class of solutions and their properties. Next we present an example.

\begin{example}[Radial solutions]
Consider the function $v$ given by
\[
	v(x) := \frac{|x|}{2d}^2 - \frac{1}{8d}
\]
restricted to $B_1$. We have $\Omega^+(v)=\{x\in B_1, |x|> 1/2\}$ and $\Omega^-(v)=\{x\in B_1, |x|< 1/2\}$. If we set $F_1=F_2:=\Delta$, then $v$ solves \eqref{eq_main}. In addition, the free transmission is given by $\{|x|=1/2\}$.
\end{example}

In the sequel, we put forward an assumption requiring both operators $F_1$ and $F_2$ to be convex.

\begin{Assumption}[Convexity]\label{assump_convexity}
For $i=1, 2$, we suppose the operator $F_i:\mathcal{S}(d)\to \mathbb{R}$  to be convex.
\end{Assumption}




The next assumption concerns homogeneity of degree $1$. It plays a major role in the quadratic growth of the solutions. The argument towards quadratic growth  in \cite{Caffarelli-Karp-Shahgholian2000} uses the linearity of the Laplacian operator. In \cite{Lee-Shahgholian2001} the authors notice that in the fully nonlinear case the condition that parallels linearity is the homogeneity of degree $1$. 

\begin{Assumption}[Homogeneity of degree one]\label{assump_homo}
We suppose $F_1$ and $F_2$ to be homogeneous of degree one; that is, for every $\tau\in\mathbb{R}$ and $M\in\mathcal{S}(d)$, we have
\[
	F_i(\tau  M) = \tau F_i(M),
\]
for every $i=1,2$.
\end{Assumption}

Before proceeding with further assumptions, we introduce some notation used throughout the paper. We denote by $\Omega^+(u)$ the subset of the unit ball where $u> 0$, whereas $\Omega^-(u)$ stands for the set where $u<0$. That is,
\[
	\Omega^+(u) := \left\{x\in B_1 \mid u(x) > 0\right\}\hspace{.2in}\mbox{and}\hspace{.2in}\Omega^-(u) := \left\{x\in B_1 \mid u(x) < 0\right\}.
\]
When referring to the set where $u\neq 0$ it is convenient to use the notation $\Omega(u):=\Omega^+(u)\cup\Omega^-(u)$. With $\partial\Omega(u)$ we denote the union of the topological boundaries of $\Omega^+$ and $\Omega^-$. I.e.,
\[
	\partial\Omega(u) := \left(\partial\Omega^+(u) \cup\partial\Omega^-(u)\right) \cap  B_1.
\]
Also, we denote with $\Sigma(u)$ the set where $u$ vanishes:
\[
	\Sigma(u) = \left\{x\in B_1 \mid u(x)=0\right\}.
\] 

A further condition imposed on the problem regards the subregion $\Omega^-(u)$. It is critical in proving quadratic growth of the solutions through the set of methods used in the paper. For $x^*\in\partial \Omega$ and $0<r\ll 1$, we consider the quantity
\begin{equation}\label{eq_volume}
	V_r(x^*,u) := \frac{\vol\left(B_r(x^*)\cap \Omega^-(u)\right)}{r^d}.
\end{equation}
For ease of notation, we set $V_r(0,u)=:V_r(u)$. 

\begin{Assumption}[Normalized volume of $\Omega^-(u)$]\label{assump_smalldensity}
We suppose there exists $C_0>0$, to be determined later, such that 
\[
	V_r(x^*,u) \leq C_0
\]
for every $x^*\in \partial\Omega(u)$ and every $r\in(0,1/2)$.
\end{Assumption}
The former assumption imposes a control on the size of the subregion where $u$ is negative. It resonates on the geometry of the free boundary; see Figure \ref{fig_geometry}. 

\bigskip

\begin{figure}[h]
\centering

\psscalebox{.60 .60} 
{
\begin{pspicture}(0,-3.1284568)(18.5,3.1284568)
\definecolor{colour0}{rgb}{0.8,0.8,0.8}
\pscircle[linecolor=black, linewidth=0.04, dimen=outer](15.5,-0.09154316){3.0}
\psbezier[linecolor=black, linewidth=0.002, fillstyle=crosshatch, hatchwidth=0.02, hatchangle=0.0, hatchsep=0.1](12.970633,1.4611151)(12.880956,1.2453679)(12.48801,0.71934164)(12.530127,-0.1490115139152431)(12.572244,-1.0173646)(12.969116,-1.6890235)(12.975697,-1.6831887)(12.982276,-1.677354)(12.859482,-1.7566767)(13.68962,-1.2325559)(14.519758,-0.708435)(14.239495,-0.881016)(14.241519,-0.8831887)(14.243543,-0.88536143)(14.06253,-0.4654618)(14.013671,-0.1540748)(13.964812,0.15731218)(14.243678,0.64595544)(14.236456,0.65605175)(14.229233,0.6661481)(14.257428,0.6448469)(13.583291,1.0813682)(12.909154,1.5178895)(13.815536,0.9292136)(12.9655695,1.4560518)
\pscircle[linecolor=black, linewidth=0.04, fillstyle=solid,fillcolor=colour0, dimen=outer](15.5,-0.09154316){1.5}
\psbezier[linecolor=black, linewidth=0.002, fillstyle=solid](14.236456,0.67630494)(14.231597,0.6862829)(14.095536,0.39288762)(14.064303,0.1902289924138711)(14.0330715,-0.012429634)(14.02719,-0.24362262)(14.07443,-0.42242923)(14.121671,-0.60123587)(14.246002,-0.86701995)(14.241519,-0.86799884)(14.237036,-0.8689777)(15.477188,-0.09073465)(15.482025,-0.09331531)(15.486862,-0.09589597)(15.107416,0.12028507)(14.256709,0.6459252)
\pscircle[linecolor=black, linewidth=0.04, fillstyle=solid,fillcolor=colour0, dimen=outer](5.66,-0.011543159){1.5}
\psbezier[linecolor=black, linewidth=0.002, fillstyle=solid](4.1977777,0.12623462)(4.1968336,0.12796025)(4.1991167,-0.16414799)(4.193333,-0.1715431594848633)(4.1875496,-0.17893834)(3.9077222,-0.19780011)(4.6822224,-0.08709872)(5.4567223,0.023602687)(5.4395037,-0.015469238)(5.437778,-0.015987605)(5.436052,-0.01650597)(5.0840387,0.016403876)(4.668889,0.06845684)(4.253739,0.1205098)(4.2125635,0.11561627)(4.211111,0.13067906)
\psbezier[linecolor=black, linewidth=0.04](0.86,2.9884567)(0.86,-0.011543159)(5.66,-0.011543159)(5.66,-0.01154315948486328)(5.66,-0.011543159)(0.86,-0.011543159)(0.86,-3.0115433)
\pscircle[linecolor=black, linewidth=0.04, dimen=outer](5.66,-0.011543159){3.0}
\psdots[linecolor=black, dotsize=0.1](5.637778,-0.011543159)
\psline[linecolor=black, linewidth=0.04, linestyle=dashed, dash=0.17638889cm 0.10583334cm](5.6466665,0.0062346184)(5.9933333,1.4106791)
\psbezier[linecolor=black, linewidth=0.002, fillstyle=crosshatch, hatchwidth=0.02, hatchangle=0.0, hatchsep=0.1](2.74,0.54401237)(2.753396,0.53395313)(2.7,0.56179017)(2.6777778,0.046234618292915)(2.6555555,-0.46932092)(2.7484496,-0.58702135)(2.74,-0.58932096)(2.7315505,-0.5916205)(3.2954895,-0.38465032)(3.4955556,-0.32265428)(3.6956215,-0.26065823)(4.1958985,-0.16898705)(4.188889,-0.16709872)(4.181879,-0.16521038)(4.1836395,0.11466468)(4.188889,0.12179017)(4.1941385,0.12891567)(3.4314184,0.30839092)(3.442222,0.29067907)(3.453026,0.27296722)(2.756215,0.5202161)(2.7533333,0.5262346)
\psline[linecolor=black, linewidth=0.04, linestyle=dashed, dash=0.17638889cm 0.10583334cm](5.6555557,-0.024876492)(8.326667,-1.3359876)
\psdots[linecolor=black, dotsize=0.1](15.4777775,-0.09154316)
\psline[linecolor=black, linewidth=0.04, linestyle=dashed, dash=0.17638889cm 0.10583334cm](15.486667,-0.07376538)(15.833333,1.330679)
\psline[linecolor=black, linewidth=0.04, linestyle=dashed, dash=0.17638889cm 0.10583334cm](15.495556,-0.104876496)(18.166666,-1.4159876)
\psline[linecolor=black, linewidth=0.04](10.692727,2.890275)(15.48,-0.09154316)(10.7,-3.1115432)
\rput[bl](15.14,-0.6715432){\Large{$x^*$}}
\rput[bl](5.24,-0.5515432){\Large{$x^*$}}
\rput[bl](7.56,-1.6515431){\Large{$r_1$}}
\rput[bl](17.46,-1.7315432){\Large{$r_1$}}
\rput[bl](5.3,0.9484568){\Large{$r_2$}}
\rput[bl](15.18,0.8884568){\Large{$r_2$}}
\rput[bl](0.26,-0.27154315){\LARGE{$u < 0$}}
\rput[bl](10.48,-0.33154315){\LARGE{$u < 0$}}
\rput[bl](0.0,2.548457){\LARGE{$\partial\Omega$}}
\rput[bl](11.24,2.7084568){\LARGE{$\partial\Omega$}}
\end{pspicture}
}
\caption{The geometry depicted on the left is within the scope of \eqref{eq_smalldensity}. In fact, as the radii of the balls centered at $x^*$ decrease from $r_1$ to $r_2$, $V(x^*,r)$ decreases even faster. The case on the right behaves differently. Here, the normalized volume is constant, independent of the radii of the ball; hence, it might fail to satisfy a prescribed smallness regime as in \eqref{eq_smalldensity}.}\label{fig_geometry}
\end{figure}
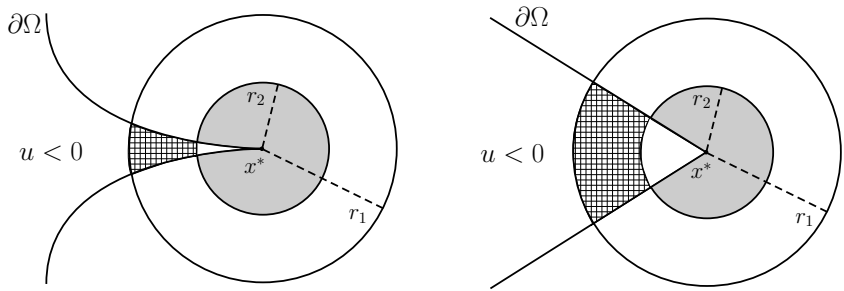

\bigskip

%
%

We proceed by introducing the notion of thickness. For any set $A$, we denote by $\text{MD}(A)$ the smallest possible distance between two parallel hyperplanes containing $A$. For a function $u\in W_{\rm loc}^{2,d}(\mathbb{R}^d)$ we define the thickness of $\Sigma(u)$ in $B_r(x)$ as
\begin{equation}\label{eq_thicknessdef}
	\delta_r(u,x)  :=  \frac{\text{MD}(\Sigma(u)\cap B_r(x))}{r}.
\end{equation}
The thickness $\delta_r$ satisfies some properties which we list below. We refer to \cite[Chapter 5]{Petrosyan-Shahgholian-Uraltseva2012} for more details. See also \cite{Figalli-Shahgholian_2014}.
\begin{Proposition}\label{prop_thick}
Let $u\in W_{\rm loc}^{2,d}(\mathbb{R}^d)$ be a solution to \eqref{eq_main} in $\mathbb{R}^d$. The measure of thickness $\delta_r$, introduced in \eqref{eq_thicknessdef}, satisfies the following properties:
\begin{enumerate}
\item Let $u_r:B_1\to\mathbb{R}$ be defined as 
\[
	u_r(x) := \frac{u(rx)}{r^2},
\]
for $r>0$. Then, 
\[
	\delta_1(u_r,0) = \delta_r(u,x);
\]
\item Let $P_2:B_1\to\mathbb{R}$ be a polynomial global solution of the form
\[
	P_2(x) := \sum_j a_jx^2_j,
\]
with $a_j$ such that either $F_1(D^2P_2) = 1$ or $F_2(D^2P_2)=1$. Then, we have $\delta_r(P_2,0) = 0$;
\item If $u_r$ converges to some function $u_0$ then 
\[
\limsup_{r\to0}\delta_r(u, x_0) \leq \delta_1(u_0,0).
\]
\end{enumerate}
\end{Proposition}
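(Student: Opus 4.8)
All three items concern the minimal-diameter functional $\mathrm{MD}$ applied to the vanishing set $\Sigma(u)$, so the plan is to isolate in each case the elementary property of $\mathrm{MD}$ that is actually at work: affine covariance (item 1), vanishing on affine subspaces of positive codimension (item 2), and one-sided semicontinuity along the limit of the zero sets (item 3).

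For item (1) I would track the zero set through the blow-up. The normalization by $r^{2}$ is immaterial to the vanishing locus, so $\Sigma(u_r)=\tfrac1r\Sigma(u)$ and hence $\Sigma(u_r)\cap B_1=\tfrac1r(\Sigma(u)\cap B_r)$; if the blow-up is centered at a point $x$, one gets instead $\tfrac1r\bigl((\Sigma(u)\cap B_r(x))-x\bigr)$. Since $\mathrm{MD}$, being the least distance between two parallel hyperplanes enclosing a set, is invariant under translations and homogeneous of degree one under dilations, evaluating $\delta_1(u_r,0)=\mathrm{MD}(\Sigma(u_r)\cap B_1)$ yields $\delta_1(u_r,0)=r^{-1}\mathrm{MD}(\Sigma(u)\cap B_r(x))=\delta_r(u,x)$, which is \eqref{eq_thicknessdef}.

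For item (2) the polynomial solutions in question have sign-definite second-order coefficients (cf. \cite[Chapter 5]{Petrosyan-Shahgholian-Uraltseva2012}); take $a_j\ge 0$, the opposite sign being symmetric. Here $D^2P_2=2\,\mathrm{diag}(a_1,\dots,a_d)$ is constant, and $P_2\not\equiv 0$ since otherwise \eqref{eq_main} reads $0=1$, so some $a_j>0$. The identity $\sum_j a_jx_j^2=0$ then forces $x_j=0$ for every $j$ with $a_j>0$, so $\Sigma(P_2)$ is a linear subspace of dimension at most $d-1$ and, in particular, is contained in a hyperplane; pairing that hyperplane with itself encloses $\Sigma(P_2)\cap B_r$ at distance $0$. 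Hence $\mathrm{MD}(\Sigma(P_2)\cap B_r)=0$ and $\delta_r(P_2,0)=0$ for every $r>0$.

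The substance of the statement, and the only step where I anticipate genuine care, is item (3): zero sets need not converge in the Hausdorff sense under locally uniform convergence $u_r\to u_0$, and only the upper half of that convergence survives. I would proceed as follows: if $x_r\in\Sigma(u_r)$ and $x_r\to\bar x$ in $\overline{B_1}$, then $u_0(\bar x)=\lim_r u_r(x_r)=0$, so $\bar x\in\Sigma(u_0)$; consequently, given $\varepsilon>0$ there is $r_0>0$ with $\Sigma(u_r)\cap\overline{B_1}\subset\bigl(\Sigma(u_0)\cap\overline{B_1}\bigr)_\varepsilon$, the $\varepsilon$-neighbourhood, for all $r<r_0$. (If $\Sigma(u_0)\cap\overline{B_1}=\varnothing$, the strict positivity of $|u_0|$ on $\overline{B_1}$ forces $\Sigma(u_r)\cap\overline{B_1}=\varnothing$ for small $r$, and both thicknesses vanish under the convention $\mathrm{MD}(\varnothing)=0$.) Since a set contained in the $\varepsilon$-neighbourhood of a slab of width $w$ lies in a slab of width $w+2\varepsilon$, we obtain $\delta_1(u_r,0)\le\delta_1(u_0,0)+2\varepsilon$ for $r<r_0$; letting $r\to0$, then $\varepsilon\to0$, and rewriting $\delta_1(u_r,0)=\delta_r(u,x_0)$ through item (1) (with $u_r$ the blow-up centered at $x_0$), we conclude $\limsup_{r\to0}\delta_r(u,x_0)\le\delta_1(u_0,0)$. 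The delicate points are precisely this non-Hausdorff behaviour of the vanishing sets, the bookkeeping between $B_1$ and $\overline{B_1}$ in \eqref{eq_thicknessdef}, and the degenerate configurations (empty zero set, or a single point), each handled via the strict positivity of the right-hand side of \eqref{eq_main} and the convention for $\mathrm{MD}(\varnothing)$.
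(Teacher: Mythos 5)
Your proposal is correct, and it in fact supplies an argument the paper itself never writes out: for Proposition \ref{prop_thick} the authors only point to \cite[Chapter 5]{Petrosyan-Shahgholian-Uraltseva2012} and \cite{Figalli-Shahgholian_2014}, and your three-step proof (affine covariance of $\mathrm{MD}$ for item 1, a subspace zero set for item 2, compactness plus the slab-enlargement estimate for item 3) is exactly the standard mechanism those references use, including the correct observation that the paper's formula in item 1 tacitly assumes the blow-up is centered at $x$.

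Two small points deserve tightening. In item (2) you invoke sign-definiteness of the $a_j$ by citation, but this is precisely the delicate point of the literal statement: negative-semidefinite Hessians are ruled out internally (ellipticity and $F_i(0)=0$ give $F_i(D^2P_2)\leq 0<1$), while a mixed-sign saddle such as $P_2=x_1^2-\tfrac12 x_2^2$ with $F_1=F_2=\Delta$ does satisfy $F_i(D^2P_2)=1$ and has a cone as zero set, with positive thickness; so the statement must be read, as you do, for one-phase polynomials $P_2\geq 0$ (the blow-ups actually arising under the density condition A\ref{assump_smalldensity} and in the convexity argument of the paper), and it is better to say this explicitly than to lean on the reference. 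In item (3), your chain bounds $\delta_1(u_r,0)$ by $\mathrm{MD}\bigl(\Sigma(u_0)\cap\overline{B_1}\bigr)+2\varepsilon$, which coincides with $\delta_1(u_0,0)+2\varepsilon$ only if zeros of $u_0$ sitting on $\partial B_1$ do not enlarge the minimal diameter; this is harmless for the way the proposition is used in Proposition \ref{prop_conv} (where the bound is played against \eqref{eq_thick}), but if you want the inequality exactly as stated with open balls you should either run the argument in a slightly smaller ball $B_{1-\sigma}$ and let $\sigma\to 0$ after rescaling, or adopt the closed-ball convention in \eqref{eq_thicknessdef}. Neither issue changes the substance: the compactness argument for the upper semicontinuity is the right, and the only nontrivial, ingredient.
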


In the next section we examine the regularity of strong solutions to \eqref{eq_main}. In particular, we present the proof of Theorem \ref{teo_main1}.

\section{Regularity of the solutions}\label{sec_regsol}

In this section we detail the proof of Theorem \ref{teo_main1}. We start by relating \eqref{eq_main} with viscosity inequalities of the form
\begin{equation}\label{eq_viscineq1}
	\min\left(F_1(D^2u),F_2(D^2u)\right)\leq 1\hspace{.4in}\mbox{in}\hspace{.1in}B_1
\end{equation}
and
\begin{equation}\label{eq_viscineq2}
	\max\left(F_1(D^2u),F_2(D^2u)\right)\geq -1\hspace{.4in}\mbox{in}\hspace{.1in}B_1
\end{equation}

\begin{Lemma}\label{prop_strongareldvisc}
Let $u\in W_{\rm loc}^{2,d}(B_1)$ be a strong solution to \eqref{eq_main}. Suppose A\ref{assump_ellipticity} holds true. Then $u$ is a $C$-viscosity solution to the inequalities \eqref{eq_viscineq1}-\eqref{eq_viscineq2}.
\end{Lemma}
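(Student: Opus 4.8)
The plan is to argue pointwise, using the fact that a $W^{2,d}_{\rm loc}$ function has a.e.\ defined second derivatives and that, by a classical result (see \cite[Chapter 9]{GilbargTrudinger}), such a function admits, at a.e.\ point, a second-order Taylor expansion; equivalently, at a.e.\ point $x_0$ where $u$ is touched from above (resp.\ below) by a $C^2$ test function $\varphi$, one has $D^2\varphi(x_0)\geq D^2u(x_0)$ (resp.\ $D^2\varphi(x_0)\leq D^2u(x_0)$) in the sense of symmetric matrices, where $D^2u(x_0)$ denotes the a.e.\ pointwise Hessian. The strategy is: (i) reduce the verification of the viscosity inequalities to the set of points where $D^2u$ exists and the equation \eqref{eq_main} holds pointwise, which is a set of full measure; (ii) on this set, split according to the sign of $u(x_0)$ and read off the desired inequality from \eqref{eq_main} together with uniform ellipticity (Assumption A\ref{assump_ellipticity}); (iii) handle the contact set $\Sigma(u)$ separately, since there \eqref{eq_main} gives no information directly.

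First I would fix $\varphi\in C^2_{\rm loc}(B_1)$ and $x_0\in B_1$ such that $u-\varphi$ has a local maximum at $x_0$, and I would show $\min(F_1(D^2\varphi(x_0)),F_2(D^2\varphi(x_0)))\leq 1$. The key reduction is that it suffices to prove this inequality for $x_0$ in a dense subset of the contact set — more precisely, one uses the standard fact that if $u-\varphi$ has a local max at $x_0$, then (perturbing $\varphi$ by a small strictly concave quadratic and using that $W^{2,d}$-functions are twice differentiable a.e.) one may find points $x_k\to x_0$ at which $u$ is twice differentiable, $D^2u(x_k)\leq D^2\varphi(x_k)+o(1)$, and \eqref{eq_main} holds in the pointwise a.e.\ sense. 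So I would invoke this standard approximation lemma (it is exactly how one shows $W^{2,d}$-strong solutions of $G(D^2u)=0$ are $L^d$-viscosity, hence $C$-viscosity when $G$ is continuous), reducing everything to a.e.\ points.

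At such an a.e.\ point $x_k$, consider the three cases. If $u(x_k)>0$, then \eqref{eq_main} gives $F_1(D^2u(x_k))=1$; if $u(x_k)<0$, then $F_2(D^2u(x_k))=1$. In either case, since $D^2u(x_k)\leq D^2\varphi(x_k)+o(1)$ and each $F_i$ is monotone (a consequence of $(\lambda,\Lambda)$-ellipticity: $M\leq N\Rightarrow F_i(M)\leq F_i(N)$), we get $\min(F_1(D^2\varphi(x_k)),F_2(D^2\varphi(x_k)))\leq F_i(D^2u(x_k))+o(1)=1+o(1)$. The remaining case is $u(x_k)=0$; here \eqref{eq_main} is silent, but on $\Sigma(u)$ with $x_k$ a point of twice differentiability one has $Du(x_k)=0$ and $D^2u(x_k)\leq 0$ (since $0$ is a minimum of $|u|$, more carefully $\Sigma(u)$ is a level set so $u$ has an extremum there in the differentiable sense), whence $F_i(D^2u(x_k))\leq F_i(0)=0\leq 1$ by monotonicity and $F_i(0)=0$. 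Passing $x_k\to x_0$ and using continuity of $F_1,F_2$ and of $D^2\varphi$ yields $\min(F_1(D^2\varphi(x_0)),F_2(D^2\varphi(x_0)))\leq 1$, which is \eqref{eq_viscineq1}. The supersolution inequality \eqref{eq_viscineq2} is entirely symmetric: touching from below gives $D^2u(x_k)\geq D^2\varphi(x_k)-o(1)$, in the regions $\{u>0\}$, $\{u<0\}$ one uses the equation, and on $\Sigma(u)$ one uses $D^2u(x_k)\geq 0$ so $F_i(D^2u(x_k))\geq 0\geq -1$, giving $\max(F_1(D^2\varphi(x_0)),F_2(D^2\varphi(x_0)))\geq -1$.

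The main obstacle is making rigorous the step on $\Sigma(u)$ and, more fundamentally, the a.e.-approximation lemma: one must ensure that the points $x_k$ can be chosen both where $u$ is twice differentiable and where the Hessian comparison with $\varphi$ is nearly valid, and that the set $\Sigma(u)$ does not cause a measure-theoretic obstruction (in fact at a.e.\ point of $\Sigma(u)$ that is a point of twice differentiability, $D^2u=0$, which makes the argument clean). I expect to cite \cite[Chapter 9]{GilbargTrudinger} and \cite{CCKS96} for the precise statements rather than reprove them, so the proof is short modulo these standard facts.
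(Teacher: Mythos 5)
Your overall strategy coincides with the paper's: the paper disposes of Lemma \ref{prop_strongareldvisc} by invoking the maximum principle for $W^{2,d}$-functions (\cite[Corollary 3]{Lions83}, \cite{Bony1967}) plus ``standard computations'', which is precisely your scheme of approximating the contact point by points of twice differentiability where \eqref{eq_main} holds, together with the level-set fact that $Du=0$ and $D^2u=0$ a.e.\ on $\Sigma(u)$. The gap is in how you pair the touching directions with the two inequalities: the monotonicity step, as written, fails. If $u-\varphi$ has a local maximum at $x_0$, the Bony--Lions lemma gives points $x_k\to x_0$ with $D^2u(x_k)\le D^2\varphi(x_k)+o(1)$, and monotonicity of $F_i$ (from A\ref{assump_ellipticity}) then yields $F_i(D^2u(x_k))\le F_i(D^2\varphi(x_k))+o(1)$, i.e.\ a \emph{lower} bound $F_i(D^2\varphi(x_k))\ge F_i(D^2u(x_k))-o(1)$; it does not give the upper bound ``$\min_i F_i(D^2\varphi(x_k))\le F_i(D^2u(x_k))+o(1)$'' you assert. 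So touching from above can only produce $\max\big(F_1(D^2\varphi(x_0)),F_2(D^2\varphi(x_0))\big)\ge -1$, which is \eqref{eq_viscineq2}; the inequality \eqref{eq_viscineq1} must be tested with $\varphi$ touching $u$ from below (local minimum of $u-\varphi$), where the same lemma gives $D^2u(x_k)\ge D^2\varphi(x_k)-o(1)$ and hence $\min_i F_i(D^2\varphi(x_k))\le F_i(D^2u(x_k))+o(1)\le 1+o(1)$. In your write-up each half actually establishes the \emph{other} inequality, and the labels must be swapped; this matters because the lemma is used afterwards in the standard sense (to place $u$ in $\overline{S}(\lambda,\Lambda,1)\cap S^*(\lambda,\Lambda,1)$ and to run the minimum principle in Proposition \ref{prop_quadratic}), so \eqref{eq_viscineq1} is genuinely the from-below inequality and \eqref{eq_viscineq2} the from-above one. (The unconventional sign in Definition \ref{def_viscsol} may have misled you, but the subsequent use of the lemma fixes the intended convention.)

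Once the directions are swapped, your argument is correct and is essentially the paper's. Two smaller points: the justification ``$Du(x_k)=0$ since $0$ is a minimum of $|u|$'' is not valid --- a zero of $u$ need not be a critical point --- but the fact you actually need is the one you state at the end, namely that $Du=0$ and $D^2u=0$ at a.e.\ point of $\Sigma(u)$ for a $W^{2,d}$-function (apply the standard level-set lemma to $u$ and then to each $\partial_i u$); since the Bony--Lions statement is of essential-limit type, you may discard the exceptional null set of $\Sigma(u)$ together with the null set where \eqref{eq_main} fails, and then $F_i(D^2u(x_k))$ equals $1$ or $0$ at every admissible $x_k$, which closes both cases cleanly.
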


The proof of Lemma \ref{prop_strongareldvisc} follows from standard computations and the maximum principle for $W^{2,d}$-functions, see \cite[Corollary 3]{Lions83} and \cite{Bony1967}. In addition, if $u$ is a continuous viscosity solution to \eqref{eq_viscineq1}-\eqref{eq_viscineq2} we also have $u\in \overline{S}(\lambda,\Lambda,1)\cap S^*(\lambda,\Lambda,1)$. In fact, because
\[
	\mathcal{M}^-_{\lambda,\Lambda}(M)\leq F_i(M)\leq \mathcal{M}^+_{\lambda,\Lambda}(M)
\]
holds for $i=1,2$, we have
\[	
	\mathcal{M}^-_{\lambda,\Lambda}(D^2u)\leq \min\left(F_1(D^2u),F_2(D^2u)\right)\leq 1
\]
and
\[	
	\mathcal{M}^+_{\lambda,\Lambda}(D^2u)\geq \max\left(F_1(D^2u),F_2(D^2u)\right)\geq -1.
\]
As a consequence to the former inclusion we derive the H\"older continuity for the strong solutions to \eqref{eq_main}, with universal estimates.

\begin{Lemma}[H\"older continuity]\label{lem_holdercont}
Let $u\in W^{2,d}_{\rm loc}(B_1)$ be a strong solution to \eqref{eq_main} and suppose A\ref{assump_ellipticity} holds. Then $u\in C^\alpha_{\rm loc}(B_1)$, for some $\alpha\in(0,1)$, and there exists $C>0$ such that
\[
	\left\|u\right\|_{C^\alpha(B_{1/2})}\leq C\left(\left\|u\right\|_{L^\infty(B_1)}+\left\|f\right\|_{L^d(B_1)}\right).
\] 
In addition, $\alpha=\alpha(\lambda,\Lambda,d)$ and $C=C(\lambda,\Lambda,d)$.
\end{Lemma}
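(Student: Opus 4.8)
The plan is to reduce the statement to the classical interior H\"older estimate of Krylov--Safonov for the Pucci extremal operators. The mechanism of the reduction is that the free transmission structure of \eqref{eq_main} --- the characteristic functions $\chi_{\{u>0\}}$ and $\chi_{\{u<0\}}$, together with the two operators $F_1,F_2$ --- gets absorbed into $\mathcal{M}^{\pm}_{\lambda,\Lambda}$, so that the free interface plays no role in this estimate and none of the structural conditions beyond uniform ellipticity is needed.

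In detail, I would argue in three steps. First, by Lemma \ref{prop_strongareldvisc} (which only uses A\ref{assump_ellipticity}), the strong solution $u$ --- which is continuous, since $W^{2,d}_{\rm loc}(B_1)\hookrightarrow C^{0,\beta}_{\rm loc}(B_1)$ for every $\beta\in(0,1)$ --- is a $C$-viscosity solution of the inequalities \eqref{eq_viscineq1} and \eqref{eq_viscineq2}. Second, as recorded after that lemma, A\ref{assump_ellipticity} and $F_i(0)=0$ give $\mathcal{M}^-_{\lambda,\Lambda}(M)\le F_i(M)\le\mathcal{M}^+_{\lambda,\Lambda}(M)$ for $i=1,2$ and every $M\in\mathcal{S}(d)$, whence, in the viscosity sense,
\[
	\mathcal{M}^-_{\lambda,\Lambda}(D^2u)\le\min\bigl(F_1(D^2u),F_2(D^2u)\bigr)\le 1
	\quad\text{and}\quad
	\mathcal{M}^+_{\lambda,\Lambda}(D^2u)\ge\max\bigl(F_1(D^2u),F_2(D^2u)\bigr)\ge -1,
\]
so that $u\in\overline{S}(\lambda,\Lambda,1)\cap S^*(\lambda,\Lambda,1)$ with constant right-hand side $f\equiv 1$. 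Third, for functions in this Krylov--Safonov class the interior H\"older estimate applies: there exist $\alpha=\alpha(\lambda,\Lambda,d)\in(0,1)$ and $C=C(\lambda,\Lambda,d)>0$ such that
\[
	\|u\|_{C^\alpha(B_{1/2})}\le C\bigl(\|u\|_{L^\infty(B_1)}+\|f\|_{L^d(B_1)}\bigr);
\]
see \cite[Proposition 4.10]{CafCab}, which follows from the Harnack inequality of Krylov--Safonov. A standard covering and scaling argument then promotes this to $u\in C^\alpha_{\rm loc}(B_1)$, completing the proof.

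Since each ingredient is classical, there is no genuine difficulty here; the step worth stressing is the very first one. Absorbing $F_1$, $F_2$ and the characteristic functions into the extremal operators makes the free boundary $\partial\Omega(u)$ --- and, more generally, the geometry of $\Omega^{\pm}(u)$ --- invisible to this estimate, so that, in contrast with Theorem \ref{teo_main1}, the convexity A\ref{assump_convexity}, homogeneity A\ref{assump_homo} and density A\ref{assump_smalldensity} hypotheses are not invoked at this stage. One should only take minor care that $u$ is genuinely continuous (hence an admissible $C$-viscosity solution) before quoting the Krylov--Safonov theory, which is exactly what the Sobolev embedding in the first step provides.
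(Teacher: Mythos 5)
Your argument is correct and coincides with the paper's own route: the paper likewise invokes Lemma \ref{prop_strongareldvisc}, notes that the pointwise bounds $\mathcal{M}^-_{\lambda,\Lambda}\le F_i\le \mathcal{M}^+_{\lambda,\Lambda}$ place $u$ in $\overline{S}(\lambda,\Lambda,1)\cap S^*(\lambda,\Lambda,1)$, and then quotes the Krylov--Safonov estimate from \cite{CafCab} (Lemma 4.10 there) to conclude, using only A\ref{assump_ellipticity}. Your additional observation that $W^{2,d}_{\rm loc}\hookrightarrow C^{0,\beta}_{\rm loc}$ guarantees the continuity needed for the $C$-viscosity framework is a harmless refinement of what the paper leaves implicit.
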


For a proof of Lemma \ref{lem_holdercont}, see \cite[Lemma 4.10]{CafCab}. In the sequel we prove that solutions to \eqref{eq_main} satisfy a quadratic growth \emph{away from the free boundary}.

\subsection{Quadratic growth away from the free boundary}\label{subsec_qg1}

Let $x^*\in B_1$ be fixed. Consider the maximal subset of $\mathbb{N}$ whose elements $j$ are such that
\begin{equation}\label{eq_16ju}
	\sup_{x\in B_{2^{-j-1}}(x^*)}|u(x)| \geq \frac{1}{16}\sup_{x\in B_{2^{-j}}(x^*)}|u(x)|;
\end{equation}
we denote such set by $\mathcal{M}(x^*,u)$.

\begin{Proposition}\label{prop_quadratic}
Let $u\in W_{\rm loc}^{2,d}(B_1)$ be a strong solution to \eqref{eq_main}. Suppose A\ref{assump_ellipticity}-A\ref{assump_homo} hold true. Let $x^*\in\partial\Omega$. There exists $C_0>0$ such that, if
\begin{equation}\label{eq_smalldensity}
	V_{2^{-j}}(x^*,u) < C_0,
\end{equation}
for every $j\in \mathcal{M}(x^*,u)$, then
\[
	\sup_{x\in B_{2^{-j}}(x^*)} |u(x)| \leq \frac{1}{C_0}2^{-2j}, \hspace{.4in} \forall j \in \mathcal{M}(x^*,u).
\]
\end{Proposition}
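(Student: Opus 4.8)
The plan is to argue by contradiction and compactness, following the dyadic scaling philosophy of \cite{Caffarelli-Karp-Shahgholian2000, Lee-Shahgholian2001}. Suppose the conclusion fails: then for each $k\in\mathbb{N}$ there is a strong solution $u_k$ to \eqref{eq_main} and an index $j_k\in\mathcal{M}(x^*,u_k)$ such that $V_{2^{-j_k}}(x^*,u_k)<1/k$ but $S_{j_k}:=\sup_{B_{2^{-j_k}}(x^*)}|u_k|>k\,2^{-2j_k}$. Translating so that $x^*=0$, I would rescale by setting
\[
	w_k(x) := \frac{u_k(2^{-j_k}x)}{S_{j_k}}, \qquad x\in B_1,
\]
so that $\|w_k\|_{L^\infty(B_1)}=1$. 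Because $F_1,F_2$ are homogeneous of degree one (Assumption A\ref{assump_homo}), $w_k$ is again a strong solution to an equation of the form \eqref{eq_main} with right-hand side $2^{-2j_k}/S_{j_k}<1/k\to0$; more precisely $w_k$ solves $F_1(D^2w_k)\chi_{\{w_k>0\}}+F_2(D^2w_k)\chi_{\{w_k<0\}}=2^{-2j_k}S_{j_k}^{-1}$ on $\Omega(w_k)$. The membership $j_k\in\mathcal{M}(0,u_k)$ translates into the non-degeneracy bound $\sup_{B_{1/2}}|w_k|\geq 1/16$, and the smallness hypothesis becomes $\vol(B_1\cap\Omega^-(w_k))=V_{2^{-j_k}}(0,u_k)<1/k\to 0$.

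Next I would extract a limit. By Lemma \ref{prop_strongareldvisc} each $w_k$ lies in $\overline S(\lambda,\Lambda,1)\cap S^*(\lambda,\Lambda,1)$, so the interior Hölder estimate of Lemma \ref{lem_holdercont} gives uniform $C^\alpha_{\rm loc}(B_1)$ bounds; passing to a subsequence, $w_k\to w_\infty$ locally uniformly in $B_1$, with $\|w_\infty\|_{L^\infty(B_1)}\le 1$ and $\sup_{B_{1/2}}|w_\infty|\ge 1/16$, in particular $w_\infty\not\equiv 0$. Since $\vol(B_1\cap\Omega^-(w_k))\to 0$, the negativity set disappears in the limit: I expect $w_\infty\ge 0$ a.e. in $B_1$ (any point where $w_\infty<0$ would, by uniform convergence, force $w_k<0$ on a fixed-size neighbourhood, contradicting the vanishing volume). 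Hence $\chi_{\{w_k<0\}}\to 0$ in the relevant sense and, because the right-hand sides $2^{-2j_k}S_{j_k}^{-1}\to 0$, stability of viscosity solutions (together with the $C$-viscosity inequalities \eqref{eq_viscineq1}--\eqref{eq_viscineq2}, which pass to limits) yields that $w_\infty$ satisfies
\[
	\mathcal{M}^-_{\lambda,\Lambda}(D^2w_\infty)\le 0\le \mathcal{M}^+_{\lambda,\Lambda}(D^2w_\infty)\quad\text{in }B_1,
\]
i.e. $w_\infty\in S(\lambda,\Lambda,0)$; in fact one should get $F_1(D^2w_\infty)=0$ in the open set $\{w_\infty>0\}$. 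Using convexity (A\ref{assump_convexity}) one can invoke Evans--Krylov / Caffarelli interior estimates to see $w_\infty$ is as regular as needed — but the only feature I need is that $w_\infty$ is a bounded solution of a homogeneous uniformly elliptic equation that is nonnegative.

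Finally I would derive the contradiction from the maximum principle. The key point is that a nonnegative bounded solution of the homogeneous equation with $\|w_\infty\|_{L^\infty(B_1)}\le 1$ and $\sup_{B_{1/2}}|w_\infty|\ge 1/16$ still touches $0$ somewhere on $\partial\Omega(u_\infty)\cap\overline{B_{1/2}}$ (the rescaled free boundary point $0$ persists, since each $w_k$ vanishes at $0$, hence $w_\infty(0)=0$). Then the strong maximum principle / Hopf-type argument for $\mathcal{M}^\pm_{\lambda,\Lambda}$ forces $w_\infty\equiv 0$ in a neighbourhood, which is incompatible with $\sup_{B_{1/2}}|w_\infty|\ge 1/16$ — unless $w_\infty$ changes sign, which it cannot. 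This contradiction proves the proposition, and tracking the argument shows $C_0$ can be chosen universally (depending only on $\lambda,\Lambda,d$).

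The main obstacle, I expect, is the limit step: controlling the behaviour of $\chi_{\{w_k<0\}}$ and $\chi_{\{w_k>0\}}$ and justifying that the limiting equation is genuinely $F_1(D^2w_\infty)=0$ on $\{w_\infty>0\}$ (rather than merely the extremal inequalities), which is where homogeneity of degree one is essential for the scaling to close, and where one must be careful that the vanishing volume of $\Omega^-(w_k)$ — a measure-theoretic statement — really does rule out $w_\infty$ taking negative values. A secondary subtlety is the bookkeeping that makes $j_k\in\mathcal{M}(0,u_k)$ yield precisely the nondegeneracy lower bound $\sup_{B_{1/2}}|w_\infty|\ge 1/16$ that obstructs the maximum-principle collapse.
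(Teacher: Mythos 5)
Your proposal is correct and follows essentially the same route as the paper's proof: contradiction, dyadic rescaling normalized by the supremum (you normalize by the full-ball sup where the paper uses the half-ball sup, an immaterial bookkeeping difference), homogeneity of degree one to send the right-hand side to zero, Krylov--Safonov/H\"older compactness, vanishing volume of the negativity set to get $w_\infty\geq 0$ with $w_\infty(0)=0$, and then the minimum principle for supersolutions in $\overline{S}(\lambda,\Lambda,0)$ to reach the contradiction. The only cosmetic correction is in the last step: the strong minimum principle forces $w_\infty\equiv 0$ on all of the connected domain $B_1$, not merely on a neighbourhood of the origin, and it is this global vanishing that contradicts $\sup_{B_{1/2}}|w_\infty|\geq 1/16$.
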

\begin{proof}
For ease of presentation, we split the proof in three steps. 

\medskip

\noindent{\bf Step 1 - }Set $x^*=0$ and $\mathcal{M}(u):=\mathcal{M}(0,u)$. We resort to a contradiction argument; suppose the statement of the proposition is false. Then, there exist sequences $(u_n)_{n\in\mathbb{N}}$ and $(j_n)_{n\in\mathbb{N}}$ such that $u_n$ is a normalized strong solution to \eqref{eq_main}, 
\begin{equation}\label{eq_quadcont1}
	V_\frac{1}{2^{n}}(u_n) < \frac{1}{n},
\end{equation}
with
\begin{equation}\label{eq_quadcont2}
	\sup_{x\in B_{2^{-j_n}}} |u_n(x)| > \frac{n}{2^{2j_n}},
\end{equation}
for every $j_n\in\mathcal{M}(u_{n})$, and $n\in\mathbb{N}$. Because $\left\|u_n\right\|_{L^\infty(B_1)}$ is uniformly bounded, it follows from \eqref{eq_quadcont2} that $j_n\longrightarrow \infty$. In particular, we may re-write \eqref{eq_quadcont1} as
\begin{equation}\label{eq_quadcont1a}
	V_\frac{1}{2^{j_n}}(u_n) < \frac{1}{j_n}.
\end{equation}

\medskip

\noindent{\bf Step 2 - }Now, we introduce an auxiliary function $v_n:B_1\to\mathbb{R}$, given by
\[
	v_n(x) := \frac{u_n(2^{-j_n}x)}{\left\|u_n\right\|_{L^\infty\left(B_{2^{-(j_n+1)}}\right)}}.
\]
Clearly, $v_n(0)=0$. In addition, $V_1(v_n)\longrightarrow 0$. Moreover, it follows from the definition of $v_n$ that 
\begin{equation}\label{eq_quadcont3}
	\sup_{B_{1/2}}|v_n(x)| = 1
\end{equation}
and
\[
	\sup_{B_{1}}|v_n(x)| \leq 16.
\]
We notice that A\ref{assump_homo} yields
\[
	\min\left(F_1(D^2v_n),F_2(D^2v_n)\right)\leq\frac{\min\left(F_1(D^2u_n(2^{-j_n}x)),F_2(D^2u_n(2^{-j_n}x))\right)}{2^{2{j_n}}\left\|u_n\right\|_{L^\infty\left(B_{2^{-(j_n+1)}}\right)}}.
\]
Therefore,
\begin{equation}\label{eq_quadcont4}
	\min\left(F_1(D^2v_n),F_2(D^2v_n)\right)\leq \frac{1}{n} \frac{C\left\|u_n\right\|_{L^\infty\left(B_{2^{-j_n}}\right)}}{\left\|u_n\right\|_{L^\infty\left(B_{2^{-(j_n+1)}}\right)}} \leq \frac{C}{n}\leq C_0,
\end{equation}
for some $C_0>0$ and $n\gg 1$. On the other hand,
\begin{align}\nonumber
	\max\left(F_1(D^2v_n),F_2(D^2v_n)\right)&\geq \frac{\max\left(F_1(D^2u_n(\frac{x}{2^{j_n}})),F_2(D^2u_n(\frac{x}{2^{j_n}}))\right)}{2^{2{j_n}}\left\|u_n\right\|_{L^\infty\left(B_{2^{-(j_n+1)}}\right)}}\\\label{eq_vbound}
		&\geq -C_0.
\end{align}
It follows from \eqref{eq_quadcont4}-\eqref{eq_vbound} that $(v_n)_{n\in\mathbb{N}}\subset S^*(\lambda,\Lambda,C_0)$. As a consequence, $v_n\in C^\alpha_{loc}(B_1)$ for every $n\in \mathbb{N}$, for some unknown $\alpha\in (0,1)$, with uniform estimates; see \cite[Proposition 4.10]{CafCab}. Therefore, there exists $v_\infty$ such that $v_n\longrightarrow v_\infty$ in $C ^{\beta}_{\rm loc}(B_1)$, for every $0<\beta<\alpha$. Since $v_n(0)=0$ for every $n\in\mathbb{N}$ we infer that $v_\infty(0)=0$, whereas \eqref{eq_quadcont3} leads to $\left\|v_\infty\right\|_{L^\infty(B_{1/2})}=1$. Because $V_1(v_n)\longrightarrow 0$, we conclude that $v_\infty\geq 0$ in $B_1$.

\medskip

\noindent{\bf Step 3 - }Standard stability results for viscosity solutions build upon \eqref{eq_quadcont4} to ensure
\[
	\min\left(F_1(D^2v_\infty),F_2(D^2v_\infty)\right)\leq 0 \hspace{.4in} \mbox{in} \hspace{.4in} B_1.
\]
We conclude that $v_\infty\in \overline{S}(\lambda,\Lambda,0)$ attains an interior local minimum at the origin, which leads to a contradiction (see, for instance, \cite[Proposition 4.9]{CafCab}).
\end{proof}

In Proposition \ref{prop_quadratic} the constant $C_0>0$ informing A\ref{assump_smalldensity} is determined. This quantity remains unchanged throughout the paper. The next result extrapolates the former analysis from $\mathcal{M}(x^*,u)$ to the entire set of natural numbers.

\begin{Proposition}\label{prop_quadnaturals}
Let $u\in W_{\rm loc}^{2,d}(B_1)$ be a strong solution to \eqref{eq_main}. Suppose A\ref{assump_ellipticity}-A\ref{assump_homo} hold true. Let $x^*\in\partial\Omega$. Suppose further that for every $j\in \mathcal{M}(x^*,u)$ we have
\[
	V_{2^{-j}}(x^*,u) < C_0,
\]
for $C_0>0$ fixed in \eqref{eq_smalldensity}. Then
\[
	\sup_{x\in B_{2^{-j}}(x^*)} |u(x)| \leq \frac{4}{C_0}2^{-2j}, \hspace{.4in} \forall j \in \mathbb{N}.
\]
\end{Proposition}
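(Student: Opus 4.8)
The plan is to upgrade the quadratic-growth estimate from the sparse index set $\mathcal{M}(x^*,u)$ to all of $\mathbb{N}$ by a straightforward ``filling in the gaps'' argument, using the very definition of $\mathcal{M}(x^*,u)$. Fix $x^*\in\partial\Omega$ and abbreviate $S_j := \sup_{B_{2^{-j}}(x^*)}|u|$. The set $\mathcal{M}(x^*,u)$ consists of those $j$ for which $S_{j+1}\geq \tfrac{1}{16}S_j$, i.e.\ the scales at which $|u|$ does \emph{not} decay quickly. For $j\notin\mathcal{M}(x^*,u)$ one has the reverse inequality $S_{j+1}<\tfrac{1}{16}S_j$, and this contraction across consecutive ``bad'' scales is what lets us control all scales once we control the scales in $\mathcal{M}(x^*,u)$. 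The estimate from Proposition \ref{prop_quadratic} gives $S_j\leq \tfrac{1}{C_0}2^{-2j}$ for every $j\in\mathcal{M}(x^*,u)$.

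First I would dispose of the trivial case: if $\mathcal{M}(x^*,u)$ is empty or, more generally, if $j$ is smaller than the least element of $\mathcal{M}(x^*,u)$, then $S_{k+1}<\tfrac{1}{16}S_k$ for all such $k$, so $S_j$ is bounded by $16^{-(\text{something})}$ times $S_0\leq \|u\|_{L^\infty(B_1)}$, which for $0<r<1/2$ is easily absorbed into $\tfrac{4}{C_0}2^{-2j}$ after possibly shrinking $C_0$ (recall $C_0$ was fixed in Proposition \ref{prop_quadratic}, so one should instead note that $2^{-2j}\geq 2^{-2}$ for $j\leq 1$ and handle $j=0,1$ by hand, then assume $j\geq 2$). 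For the main case, given an arbitrary $j\in\mathbb{N}$, let $j_0$ be the largest element of $\mathcal{M}(x^*,u)$ with $j_0\leq j$ (it exists once $j$ is large enough; if no element of $\mathcal{M}(x^*,u)$ is $\le j$ then we are in the previously handled regime). By definition of $\mathcal{M}(x^*,u)$, for every integer $k$ with $j_0\leq k<j$ we have $S_{k+1}<\tfrac{1}{16}S_k$. Iterating,
\[
	S_j < 16^{-(j-j_0)}S_{j_0} \leq 16^{-(j-j_0)}\frac{1}{C_0}2^{-2j_0} = \frac{1}{C_0}2^{-4(j-j_0)}2^{-2j_0} \leq \frac{1}{C_0}2^{-2j},
\]
since $16^{-(j-j_0)} = 2^{-4(j-j_0)}\leq 2^{-2(j-j_0)}$. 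This already gives the bound with constant $\tfrac{1}{C_0}\leq \tfrac{4}{C_0}$ on the gap scales; on $\mathcal{M}(x^*,u)$ itself Proposition \ref{prop_quadratic} gives it directly. The factor $4$ in the statement is the slack needed to also cover the finitely many small scales $j=0,1$ (where $2^{-2j}\in\{1,\tfrac14\}$ and $S_j\leq\|u\|_{L^\infty(B_1)}\leq 1$ by normalization), so those contribute a constant that is harmless after the universal normalization.

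I do not expect any serious obstacle here: the argument is purely combinatorial, exploiting that ``bad'' scales decay geometrically by a factor $16$ which beats the required factor $4$ per scale. The only point requiring a little care is bookkeeping at the coarsest scales and making sure one does not silently re-define $C_0$ (which is frozen by Proposition \ref{prop_quadratic}); this is exactly why the statement carries the extra factor of $4$ rather than reproducing the sharp constant $1/C_0$. One should also remark that the hypothesis $V_{2^{-j}}(x^*,u)<C_0$ is only invoked through Proposition \ref{prop_quadratic}, i.e.\ only on $j\in\mathcal{M}(x^*,u)$, which is consistent with how it is stated.
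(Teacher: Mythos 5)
Your route is genuinely different from the paper's: you argue directly, filling in the gap scales between consecutive elements of $\mathcal{M}(x^*,u)$ by iterating the factor-$16$ decay, whereas the paper argues by contradiction with the \emph{smallest} violating scale $m$, shows in one line that $m-1\in\mathcal{M}(u)$ (because minimality gives $S_{m-1}\leq \frac{16}{C_0}2^{-2m}<4S_m$), applies Proposition \ref{prop_quadratic} at $m-1$, and concludes from $S_m\leq S_{m-1}\leq \frac{4}{C_0}2^{-2m}$. The paper's version needs only one invocation of the definition of $\mathcal{M}$ and no bookkeeping over gaps; yours is more explicit about what happens at every scale, which is also fine.

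There is, however, a concrete slip in your chain of inequalities. You assert $S_{k+1}<\tfrac{1}{16}S_k$ for \emph{every} $k$ with $j_0\leq k<j$, but the first index $k=j_0$ belongs to $\mathcal{M}(x^*,u)$, where by definition the \emph{reverse} inequality $S_{j_0+1}\geq\tfrac{1}{16}S_{j_0}$ holds; so the bound $S_j<16^{-(j-j_0)}S_{j_0}$ is not justified. The fix is to start the contraction at $k=j_0+1$ and use only monotonicity across the first step: $S_j\leq 16^{-(j-j_0-1)}S_{j_0+1}\leq 16^{-(j-j_0-1)}\tfrac{1}{C_0}2^{-2j_0}$, and since $j-j_0\geq 1$ one has $16^{-(j-j_0-1)}\leq 4\cdot 2^{-2(j-j_0)}$, which recovers exactly $S_j\leq\tfrac{4}{C_0}2^{-2j}$. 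This also corrects your accounting of the constant: the factor $4$ is not slack for the coarse scales $j=0,1$; it is precisely the price of the one dyadic step where only $S_{j+1}\leq S_j$ is available (the same factor appears in the paper when passing from $m-1$ to $m$). Your treatment of the regime where no element of $\mathcal{M}(x^*,u)$ lies below $j$ (pure factor-$16$ decay from $S_0$, with the normalization $\|u\|_{L^\infty(B_1)}\leq 1$ implicit in Proposition \ref{prop_quadratic}) is acceptable and is in fact a case the paper's contradiction argument passes over silently (its proof needs $m\geq 1$).
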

\begin{proof}
As before we set $x^*=0$ and argue through a contradiction argument. Suppose the proposition is false. Let $m\in\mathbb{N}$ be the smallest natural number such that 
\begin{equation}\label{eq_quadcont5}
	\sup_{B_{2^{-m}}} |u(x)| > \frac{4}{C_0}2^{-2m}.
\end{equation}
We claim that $m-1\in\mathcal{M}(u)$. Indeed,
\[
	\sup_{B_{2^{1-m}}} |u(x)| \leq \frac{4}{C_0}2^{-2(m-1)} = \frac{16}{C_0}2^{-2m} < 4\sup_{B_{2^{-m}}} |u(x)|.
\]
We conclude
\[
	\sup_{B_{2^{-m}}} |u(x)| \leq \sup_{B_{2^{1-m}}} |u(x)| \leq \frac{1}{C_0}2^{-2(m-1)} = \frac{4}{C_0}2^{-2m},
\]
which contradicts \eqref{eq_quadcont5} and completes the proof.
\end{proof}

Consequential to Proposition \ref{prop_quadnaturals} is the quadratic growth of $u$ away from the free boundary. This is the content of the next corollary.

\begin{Corollary}[Quadratic growth]\label{cor_quadratic}
Let $u\in W_{\rm loc}^{2,d}(B_1)$ be a strong solution to \eqref{eq_main}. Suppose A\ref{assump_ellipticity}-A\ref{assump_homo} hold true. Let $x^*\in\partial\Omega\cap B_{1/2}$. Suppose further that, for every $j\in \mathcal{M}(x^*,u)$, we have
\[
	V_{2^{-j}}(x^*,u) < C_0,
\]
for $C_0>0$ as in Proposition \ref{prop_quadratic}. Then, for $0<r<1/2$ there exists $C>0$ such that
\[
	\sup_{x\in B_{r}(x^*)} |u(x)| \leq Cr^2,
\]
where $C=C\left(d,\lambda,\Lambda,\left\|u\right\|_{L^\infty(B_1)}\right)$.
\end{Corollary}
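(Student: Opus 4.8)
The plan is to deduce the corollary from Proposition~\ref{prop_quadnaturals} by a dyadic interpolation. Given $0<r<1/2$ and $x^*\in\partial\Omega\cap B_{1/2}$, choose $j\in\mathbb{N}$ with $2^{-j-1}\le r<2^{-j}$, so that $B_r(x^*)\subset B_{2^{-j}}(x^*)$ and $r^2\ge 2^{-2j-2}$, i.e. $2^{-2j}\le 4r^2$. Since the hypothesis of Proposition~\ref{prop_quadnaturals} is exactly what is assumed here, we obtain
\[
	\sup_{x\in B_r(x^*)}|u(x)| \le \sup_{x\in B_{2^{-j}}(x^*)}|u(x)| \le \frac{4}{C_0}2^{-2j} \le \frac{16}{C_0}r^2.
\]
This already gives the claim with $C=16/C_0$. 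The only subtlety is that the conclusion of Proposition~\ref{prop_quadnaturals} is stated for $x^*\in\partial\Omega$ without the localization $x^*\in B_{1/2}$ and for $r\in(0,1)$ via $j\in\mathbb{N}$; restricting to $x^*\in B_{1/2}$ and $r<1/2$ is harmless and in fact only needed to ensure $B_r(x^*)\Subset B_1$ so that the $L^\infty$-norm of $u$ controls everything — this is where the dependence $C=C(d,\lambda,\Lambda,\|u\|_{L^\infty(B_1)})$ enters, since $C_0$ is universal (fixed in Proposition~\ref{prop_quadratic}) and the passage from the normalized solutions back to $u$ in the proof of Proposition~\ref{prop_quadratic} carried a factor $\|u\|_{L^\infty(B_1)}$.

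I would then also record the ``away from the free boundary'' phrasing: for a point $x_0\in B_{1/2}$ with $d_0:=\dist(x_0,\partial\Omega)>0$, pick $x^*\in\partial\Omega$ realizing the distance (so $x^*\in\overline{B_{1/2+\varepsilon}}$, up to shrinking), apply the bound on $B_{2d_0}(x^*)\ni x_0$ to get $|u(x_0)|\le C(2d_0)^2 = 4Cd_0^2$, which is the genuinely quadratic decay of $u$ as one approaches the free interface. If one wants the estimate stated directly in terms of $d_0$ this is the step to spell out; otherwise the ball-centered form above suffices for the later scaling argument leading to Theorem~\ref{teo_main1}.

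The main obstacle, such as it is, is purely bookkeeping: making sure the choice of $j$ is legitimate (that $j\ge 0$, which holds because $r<1/2$ forces $2^{-j}\le 1$, i.e. $j\ge 1$) and that the constant is correctly tracked through the normalization in Proposition~\ref{prop_quadratic}. There is no new analytic input here — Proposition~\ref{prop_quadnaturals} does all the work, and the corollary is essentially its reformulation from the dyadic scale $\{2^{-j}\}$ to arbitrary radii $r$. I would keep the write-up to three or four lines, displaying the chain of inequalities above and noting $C=16/C_0$ with the stated dependence.
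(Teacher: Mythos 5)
Your proposal is correct and follows essentially the same route as the paper: pick $j$ with $2^{-(j+1)}\leq r<2^{-j}$, bound $\sup_{B_r(x^*)}|u|$ by $\sup_{B_{2^{-j}}(x^*)}|u|$, apply Proposition~\ref{prop_quadnaturals}, and absorb the factor $2^{-2j}\leq 4r^2$ into the constant. The extra remarks on the constant's dependence and the distance-to-the-free-boundary reformulation are fine but not needed for the corollary itself.
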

\begin{proof}
Find $j\in\mathbb{N}$ satisfying $2^{-(j+1)}\leq r< 2^{-j}$. It is straightforward to notice that
\[
	\begin{split}
		\sup_{B_r}|u(x)| \leq \sup_{B_{2^{-j}}}|u(x)| \leq C\left[\left(\frac{1}{2}\right)^{j+1-1}\right]^2 \leq Cr^2,
	\end{split}
\]	
which ends the proof.
\end{proof}

We close this section with the proof of Theorem \ref{teo_main1}; compare with \cite[Theorem 1.1]{Caffarelli-Karp-Shahgholian2000} and \cite[Remark 1.3]{Lee-Shahgholian2001}.

\begin{proof}[Proof of Theorem \ref{teo_main1}]
Suppose $0\in\partial\Omega$. Corollary \ref{cor_quadratic} leads to
\[
	|u(x)| \leq C\left[\dist(x,\partial\Omega)\right]^2,
\]
for every $x\in B_{1/2}$. Consider the auxiliary function $v:B_1\to\mathbb{R}$ given by
\[
	v(y) := \frac{u(x + y\dist(x,\partial\Omega))}{\left[\dist(x,\partial\Omega)\right]^2};
\]
clearly, $\left|D^2u(x)\right|=\left|D^2v(0)\right|$. Notice that
\[
	\left\{z\in B_1 \mid y\in B_1\;\;\;\mbox{and}\;\;\;z := x + y\dist(x,\partial\Omega)\right\}
\]
is contained in the same connected component to which $x$ belongs. Therefore, $F_i(D^2v)=1$ or $F_1(D^2v)=0$ in the unit ball. Hence, standard results in elliptic regularity theory produce
\[
	\left|D^2v(0)\right| \leq C,
\]
for some universal constant $C>0$, not depending on $x$, and the proof is complete.
\end{proof}

In the next section, we turn our attention to the analysis of the free interface. We start working under the assumption $\{Du\neq 0\}\subset \Omega$ and produce a characterization of global solutions.

\section{Classification of global solutions}

In this section we examine the non-degeneracy of the free boundary. In addition we study properties of the global solutions. We start with the non-degeneracy property. This is the content of the next proposition.

\begin{Proposition}[Non-degeneracy of the free boundary]\label{prop_non-deg}
Let $u\in W_{\rm loc}^{2,d}(B_1)$ be a strong solution to \eqref{eq_main}. Suppose that A\ref{assump_ellipticity}-A\ref{assump_smalldensity} are in force and $\{Du\neq 0\}\subset \Omega$. Let $x^*\in \partial\Omega\cap B_{1/2}$. There exists $C>0$ such that 
\[
	\sup_{x\in\partial B_r(x^*)} u(x) \geq  Cr^2
\]
for every $0<r<1/2$.
\end{Proposition}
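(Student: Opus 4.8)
The plan is to follow the classical non-degeneracy strategy via the maximum principle, as in \cite{Lee-Shahgholian2001} and \cite{Figalli-Shahgholian_2014}, adapted to the transmission setting. First I would reduce to a point $x_0 \in \Omega^+(u)$ close to $x^*$: since $x^* \in \partial\Omega \cap B_{1/2}$ and $u$ is continuous with $u(x^*) = 0$, for any $\varepsilon > 0$ there is a point $x_0$ with $u(x_0) > 0$ and $|x_0 - x^*| < \varepsilon$; it suffices to prove $\sup_{\partial B_r(x_0)} u \geq C r^2$ for such $x_0$ and then let $x_0 \to x^*$ (the $r$'s absorb the shift up to constants). So fix $x_0$ in the open set $\Omega^+(u)$, and let $B_\rho(x_0)$ be the largest ball still contained in the connected component of $\Omega$ containing $x_0$; note $\rho \leq \dist(x_0, \partial\Omega) \leq |x_0 - x^*| < \varepsilon$.

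The heart of the argument is a comparison in $\Omega^+(u) \cap B_r(x_0)$. On $\Omega^+(u)$ we have $F_1(D^2 u) = 1$, hence $\mathcal M^-_{\lambda,\Lambda}(D^2 u) \leq 1$, i.e. $u$ is a subsolution of $\mathcal M^-_{\lambda,\Lambda}(D^2 w) = 1$ there. Introduce the comparison function
\[
	w(x) := u(x_0) + \frac{\lambda}{2d}\,|x - x_0|^2 - \frac{\lambda}{2d}\,|x_0 - x_0'|^2,
\]
or more cleanly $w(x) := u(x_0) - c|x-x_0|^2$ type corrections — actually the right choice is $w(x) = u(x_0) + \frac{\lambda}{2d}(|x-x_0|^2 - \text{something})$; one checks $\mathcal M^-_{\lambda,\Lambda}(D^2 w) = \lambda \cdot \frac{\lambda}{d} \cdot d / \lambda = \lambda \geq \cdots$ — the key point is that a quadratic with the correct small coefficient satisfies $\mathcal M^-_{\lambda,\Lambda}(D^2 w) \geq 1$, so $u - w$ is a subsolution of the extremal equation on $\Omega^+(u) \cap B_r(x_0)$, and the maximum principle (valid for $W^{2,d}$ subsolutions, cf. \cite{CafCab, Lions83}) gives $\max_{\Omega^+(u)\cap B_r(x_0)}(u - w) = \max_{\partial(\Omega^+(u)\cap B_r(x_0))}(u-w)$. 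The boundary $\partial(\Omega^+(u)\cap B_r(x_0))$ splits into a portion on $\partial B_r(x_0)$ and a portion on $\partial\Omega^+(u)$ where $u = 0$; using $\{Du \neq 0\} \subset \Omega$ to rule out the possibility that $u$ has an interior critical point where $u = 0$ inside the component (so the free-boundary portion contributes only via $u = 0 \leq$ the quadratic), one concludes the max is attained on $\partial B_r(x_0)$. Evaluating $u - w$ at $x_0$ (where it equals $0$ minus a negative quantity, hence is $\geq$ some positive multiple of $r^2$ once the normalization of $w$ is chosen so that $w(x_0) \leq u(x_0) - c r^2$) forces $\sup_{\partial B_r(x_0)} u \geq \sup_{\partial B_r(x_0)} w \geq u(x_0) + c\, r^2 \geq c\, r^2$ for $r < 1/2$, with $c = c(d,\lambda)$. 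If instead $x^*$ is approached from $\Omega^-(u)$ one argues symmetrically using $-u$, $F_2$, and $\mathcal M^+$; the hypothesis $\{Du \neq 0\} \subset \Omega$ ensures the free boundary is genuinely two-sided in the sense needed.

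The main obstacle I anticipate is handling the geometry of the connected component and the free-boundary part of $\partial(\Omega^+(u) \cap B_r(x_0))$: one must ensure that on that part $u = 0$ (so that the subsolution's boundary maximum really is controlled by the $\partial B_r$ part), which is exactly where $\{Du \neq 0\} \subset \Omega$ enters — it prevents $u$ from touching zero with nonzero gradient inside the component in a way that would let a positive bump be "absorbed" without propagating to $\partial B_r$. A secondary technical point is the validity of the maximum principle / comparison principle for $W^{2,d}_{\rm loc}$ strong subsolutions of the extremal equations with bounded right-hand side on the (possibly irregular) domain $\Omega^+(u) \cap B_r(x_0)$; this is standard (see \cite[Corollary 3]{Lions83}, \cite{Bony1967}, and \cite{CafCab}) but should be invoked carefully. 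Once these are in place the estimate is immediate and uniform, and letting $x_0 \to x^*$ yields the claim at $x^*$ itself by continuity of $u$.
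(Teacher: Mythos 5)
There is a genuine gap at the very first reduction. You assume that because $x^*\in\partial\Omega$ and $u(x^*)=0$, points with $u>0$ accumulate at $x^*$. This is unjustified: $\partial\Omega$ is the union of $\partial\Omega^+(u)$ and $\partial\Omega^-(u)$, and $x^*$ may lie on $\partial\Omega^-(u)$ with $u\le 0$ in a whole neighborhood, so no admissible $x_0\in\Omega^+(u)$ near $x^*$ exists. Your fallback for that case -- running the same comparison for $-u$ with $F_2$ and $\mathcal{M}^+$ -- proves $\sup_{\partial B_r(x^*)}(-u)\ge Cr^2$, i.e.\ $\inf_{\partial B_r(x^*)}u\le -Cr^2$, which is \emph{not} the inequality asserted in Proposition \ref{prop_non-deg}: the statement demands positive quadratic growth of $u$ itself at every point of $\partial\Omega$, including points seen only from the negative phase. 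So as written the proposal proves the proposition only at points of $\partial\Omega^+(u)$, where your comparison in $\Omega^+(u)\cap B_r(x_0)$ is indeed the standard obstacle-problem non-degeneracy argument (and, incidentally, does not really need the hypothesis $\{Du\neq0\}\subset\Omega$ there).

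The paper's proof avoids phase-splitting altogether, and this is exactly where the hypothesis $\{Du\neq0\}\subset\Omega$ is used. One works at interior points $x^*\in\Omega$ with $Du(x^*)\neq0$ (such points are dense in $\Omega$, since $D^2u=0$ a.e.\ on $\{Du=0\}$ is incompatible with \eqref{eq_main} and A\ref{assump_ellipticity}), and considers $w(x)=u(x)-c\,|x-x^*|^2$ on the \emph{full} ball $B_r(x^*)$, with $c$ small so that $\max\bigl(F_1(D^2w),F_2(D^2w)\bigr)\ge 1-\mathcal{M}^+_{\lambda,\Lambda}(2cI_d)\ge 0$ a.e.\ in $\Omega$ -- note this inequality holds in \emph{both} phases, because $F_1(D^2u)=1$ in $\Omega^+$ and $F_2(D^2u)=1$ in $\Omega^-$. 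If $w$ had an interior maximum at $y$, then $Dw(y)=0$ forces $Du(y)=2c(y-x^*)$, which is nonzero unless $y=x^*$ (excluded since $Du(x^*)\neq0$); hence $Du(y)\neq0$ and the hypothesis $\{Du\neq0\}\subset\Omega$ places $y$ inside $\Omega$, where the above inequality and the strong maximum principle (Bony/Lions, \cite{Bony1967,Lions83,CafCab}) force $w$ to be constant. Thus $\max_{\partial B_r(x^*)}u\ge u(x^*)+cr^2$, and letting interior points with $Du\neq0$ tend to the free boundary point gives the claim, with no case distinction on which phase $x^*$ bounds. To repair your argument you would either need to prove that $\Omega^+(u)$ accumulates at every point of $\partial\Omega$ (which is not available at this stage), or adopt the whole-domain comparison as in \cite[Lemma 3.1]{Figalli-Shahgholian_2014}.
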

\begin{proof}
We split the proof in three steps. 

\medskip

\noindent{\bf Step  1.} Without loss of generality, we take $x^*\in\Omega$. Furthermore, the set $\{Du\neq0\}$ is dense in $\Omega$. It follows from the fact that $D^2u(x)=0$ for almost every $x\in\{ Du=0\}$, the last condition in A\ref{assump_ellipticity} and \eqref{eq_main}. Therefore, we suppose $x\in\{Du\neq0\}\cap\Omega$.

\medskip

\noindent{\bf Step  2.} We introduce an auxiliary function $w\in W_{\rm loc}^{2,d}(B_1)$, given by
\[
	w(x) := u(x) - \frac{|x - x^*|^2}{2d\lambda}.
\]
We claim that
\begin{equation}\label{eq_maxw}
	\max_{x\in\partial B_r(x^*)} w(x) = \sup_{x\in B_r(x^*)} w(x),
\end{equation}
for every $0<r<1/2$. It follows from \eqref{eq_maxw} that
\[
	\max_{x\in B_r(x^*)} u(x) \geq u(x^*) + Cr^2,
\]
where $C:=4d\Lambda$. By approximation, the former inequality yields the results. It remains to establish \eqref{eq_maxw}.

\medskip

\noindent{\bf Step 3.} Suppose \eqref{eq_maxw} is false. There exists a maximum point $y\in B_r(x^*)$ for $w$. Hence,
\[
	Dw(y) = Du(y) - \frac{y - x^*}{d\Lambda} = 0.
\]
Were $y=x^*$, it would be $Du(x^*)=0$; we conclude that $y\neq x^*$ and, in addition, 
\[
	Du(y) \neq 0.
\]
By assumption, we have $y\in \Omega$.

On the other hand, $w$ is a solution for $\max(F_1(D^2w),F_2(D^2w))\geq0$ in $\Omega$. In fact, for $x\in \Omega$, 
\[
	\max(F_1(D^2w),F_2(D^2w)) \geq  1 - \mathcal{M}^+\left(\frac{I_d}{d\lambda}\right) \geq 0 .
\]
Since $y\in\Omega$, there exists a neighborhood of $y$ where $w$ is constant. We conclude $w$ is constant in $B_r(x^*)$ and \eqref{eq_maxw} follows.
\end{proof}

\begin{Remark}\normalfont
The proof of Proposition \ref{prop_non-deg} follows closely the ideas put forward in \cite[Lemma 3.1]{Figalli-Shahgholian_2014}.
\end{Remark}

The former result has a number of standard consequences. Of particular interest is the negligibility of the free boundary in the sense of Lebesgue.

\begin{Corollary}[Lebesgue negligibility of the free boudnary]\label{cor_lebesgueneg}
Let $u\in W_{\rm loc}^{2,d}(B_1)$ be a strong solution to \eqref{eq_main}. Suppose that A\ref{assump_ellipticity}-A\ref{assump_smalldensity} are in force and $\{Du\neq 0\}\subset \Omega$. Then $\partial\Omega$ has Lebesgue measure zero.
\end{Corollary}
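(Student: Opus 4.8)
The plan is to combine the two-sided quadratic control of $u$ near $\partial\Omega$ with a standard Lebesgue-density argument. By Corollary \ref{cor_quadratic} (upper bound) and Proposition \ref{prop_non-deg} (non-degeneracy from below), for every $x^*\in\partial\Omega\cap B_{1/2}$ and every $0<r<1/2$ there are universal constants $0<c\le C$ with $cr^2\le \sup_{\partial B_r(x^*)}u\le \sup_{B_r(x^*)}|u|\le Cr^2$. The non-degeneracy inequality $\sup_{\partial B_r(x^*)}u\ge cr^2>0$ forces the ball $B_r(x^*)$ to contain a point where $u>0$, and since $u(x^*)=0$, the gradient $Du$ must be nonzero somewhere in $B_r(x^*)$; more to the point, it shows $\Omega^+(u)\cap B_r(x^*)\ne\emptyset$ for all small $r$. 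I would upgrade this to a \emph{positive density} statement: there is $\mu>0$, universal, such that $\vol\bigl(\Omega(u)\cap B_r(x^*)\bigr)\ge \mu\,r^d$ for all $0<r<1/2$ and all $x^*\in\partial\Omega\cap B_{1/2}$.

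The key step is that density lower bound. Fix $x^*\in\partial\Omega$ and $r$ small; by non-degeneracy pick $y\in\partial B_{r/2}(x^*)$ with $u(y)\ge c(r/2)^2$. Let $d_y:=\dist(y,\partial\Omega)$. If $d_y\ge r/4$ then $B_{r/4}(y)\subset\Omega$ and we are done (with $\mu$ depending on $d$). Otherwise pick $z\in\partial\Omega$ realizing $d_y=|y-z|<r/4$; then by the upper bound of Corollary \ref{cor_quadratic} applied at $z$, $u(y)\le C d_y^2$, so $c(r/2)^2\le C d_y^2$, i.e. $d_y\ge \sqrt{c/C}\,(r/2)$. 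Hence $B_{\rho}(y)\subset\Omega$ with $\rho:=\sqrt{c/C}\,(r/2)$, and since $y\in B_{r/2}(x^*)$ and $\rho\lesssim r$ we get $B_{\rho}(y)\subset B_r(x^*)$, giving $\vol(\Omega(u)\cap B_r(x^*))\ge \omega_d\rho^d=\mu r^d$ with $\mu$ universal. (One should check $\{Du\ne 0\}\subset\Omega$ is compatible with this: indeed $\Omega$ is open, so $B_\rho(y)\subset\Omega$ is genuine.)

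With the density bound in hand, the conclusion is immediate from the Lebesgue density theorem: if $\partial\Omega$ had positive Lebesgue measure, it would have a point $x^*$ of density $1$, i.e. $\lim_{r\to 0}\vol(\partial\Omega\cap B_r(x^*))/\vol(B_r(x^*))=1$. But $\partial\Omega$ and $\Omega$ are disjoint, so $\vol(\Omega\cap B_r(x^*))/\vol(B_r(x^*))\le 1-\vol(\partial\Omega\cap B_r(x^*))/\vol(B_r(x^*))\to 0$, contradicting the uniform lower bound $\vol(\Omega\cap B_r(x^*))\ge \mu r^d$ valid at every boundary point, in particular at this $x^*$ (which, having density $1$ in $\partial\Omega$, certainly lies in $\partial\Omega$). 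Therefore $|\partial\Omega|=0$.

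The main obstacle is the density estimate — specifically making the interplay between the from-below bound (Proposition \ref{prop_non-deg}) and the from-above bound (Corollary \ref{cor_quadratic}) produce an interior ball inside $\Omega$ of radius comparable to $r$; everything else is soft. A secondary point to handle with a little care is that Corollary \ref{cor_quadratic} is stated for $x^*\in\partial\Omega\cap B_{1/2}$ with constants depending on $\|u\|_{L^\infty(B_1)}$, so the density constant $\mu$ is ``universal'' only in that same sense, which is enough since the Lebesgue density argument is local and purely measure-theoretic.
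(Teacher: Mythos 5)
Your argument is correct and is precisely the standard route the paper has in mind: it states this corollary without proof, as a ``standard consequence'' of combining the non-degeneracy of Proposition \ref{prop_non-deg} with the quadratic growth from above (Corollary \ref{cor_quadratic}/Theorem \ref{teo_main1}), which is exactly your positive-density estimate at free boundary points followed by the Lebesgue density theorem. The only points to polish are cosmetic: take the sup over the closed ball (or radius $d_y+\varepsilon$) when applying the upper bound at $z$, and note that the statement, like the estimates it rests on, is local (in $B_{1/2}$, then extended by a covering/scaling argument), as you already indicate.
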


Next, we establish the classification of global solutions. At this point, we derive a further ancillary fact from Theorem \ref{teo_main1}. 
\begin{Remark}\label{rem_pow}\normalfont
Let $u\in W^{2,d}_{\rm loc}(B_1)$ be a strong solution to \eqref{eq_main}. We claim there exists a convex $(\lambda,\Lambda)$-elliptic operator $G$ and a source term $g:B_1\to\mathbb{R}$, locally bounded in the $L^\infty$-topology, such that
\[
	G(D^2u)=g\hspace{.4in}\mbox{in}\hspace{.1in}B_1
\]
in the $L^d$-viscosity sense; see \cite[Definition 2.1]{CCKS96}. In fact, we have
\[
	F_1(D^2u(x))=
		\begin{cases}
			1&\hspace{.4in}\mbox{\rm a.e.}-x\in \{u>0\}		\\
			0&\hspace{.4in}\mbox{\rm a.e.}-x\in \{u=0\}		\\
			F_1(D^2u(x))&\hspace{.4in}\mbox{\rm a.e.}-x\in \{u<0\};
		\end{cases}
\]
therefore, ellipticity and the conclusion of Theorem \ref{teo_main1} build upon the maximum principle for $W^{2,d}$-functions to yield the claim with $G:=F_1$. 
\end{Remark}
In the remainder of this section, $G$ denotes the convex, uniformly elliptic operator governing the equation satisfied by $u$ in the entire unit ball, in the $L^d$-sense.

For completeness, we recall the definition of thickness $\delta_r(u,x_0)$, stated in Section \ref{sec_mafp}: let $u\in W_{\rm loc}^{2,d}(B_1)$ be a strong solution to \eqref{eq_main} and suppose $x_0\in\partial\Omega(u)$. We have,
\[
	\delta_r(u,x_0) := \frac{\text{MD}\left(\Sigma(u)\cap B_r(x_0)\right)}{r}.
\]
We proceed with a proposition on the geometry of $u$.

\begin{Proposition}\label{prop_conv}
Let $u\in W_{\rm loc}^{2,d}(\mathbb{R}^d)$ be a strong solution to \eqref{eq_main} in $\mathbb{R}^d$. Suppose A\ref{assump_ellipticity}-A\ref{assump_smalldensity} are in force. Suppose further that there exists $\varepsilon_0>0$ such that
\begin{equation}\label{eq_thick}
\delta_r(u,x_0)  \geq  \varepsilon_0 
\end{equation} 
for all $r>0$ and every $x_0 \in \partial\Omega$. Then u is a convex function.
\end{Proposition}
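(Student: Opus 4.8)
The plan is to prove convexity of a global solution $u$ via the standard blow-down/approximation machinery adapted to the fully nonlinear obstacle-type setting, using the thickness hypothesis \eqref{eq_thick} to rule out degenerate blow-ups. First I would recall, via Remark \ref{rem_pow}, that $u$ solves a single convex uniformly elliptic equation $G(D^2u)=g$ in $L^d$-viscosity sense in all of $\mathbb{R}^d$, with $g$ locally bounded; combined with Theorem \ref{teo_main1} we have $u\in C^{1,1}_{\rm loc}(\mathbb{R}^d)$, and by the quadratic growth (Corollary \ref{cor_quadratic}) together with the non-degeneracy (Proposition \ref{prop_non-deg}), $u$ grows exactly quadratically away from $\partial\Omega$. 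This puts us squarely in the framework where one can take global blow-ups: for $x_0\in\partial\Omega$ and $r\to\infty$ (or $r\to 0$), the rescalings $u_r(x)=u(rx)/r^2$ are uniformly bounded in $C^{1,1}_{\rm loc}$ and converge, along subsequences, to a global solution $u_0$.

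The core of the argument is a dichotomy for blow-ups, analogous to \cite[Chapter 5]{Petrosyan-Shahgholian-Uraltseva2012} and \cite{Figalli-Shahgholian_2014}: any global blow-up $u_0$ at a free boundary point is either a half-space solution $\tfrac{\gamma}{2}[(x\cdot e)_+]^2 + C$ (which is convex) or a polynomial solution $P_2(x)=\sum a_j x_j^2$ with $\{a_j\}$ non-degenerate in the sense that it is not supported on a half-space — and the latter case is precisely the one excluded by the thickness condition. Indeed, by Proposition \ref{prop_thick}(1) and (3), $\delta_r(u,x_0)\geq\varepsilon_0$ for all $r$ passes to the blow-up as $\delta_1(u_0,0)\geq\varepsilon_0>0$, so $\Sigma(u_0)$ cannot be contained in a lower-dimensional set; on the other hand, a polynomial solution $P_2$ with more than one nonzero coefficient, or a full-space-type solution, would force $\delta_1=0$ by Proposition \ref{prop_thick}(2). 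The remaining candidate — a genuine half-space solution — has $\Sigma(u_0)$ equal to a half-space, for which $\delta_1(u_0,0)=+\infty\geq\varepsilon_0$, so it is consistent. Thus every blow-up is a (convex) half-space solution.

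From "all blow-ups are convex half-space solutions" I would upgrade to convexity of $u$ itself by the well-known propagation/continuity-of-convexity argument: convexity is a closed condition stable under the $C^{1,1}_{\rm loc}$ convergence $u_r\to u_0$, and the quadratic growth plus the classification gives enough control to run an interpolation between scales (or, alternatively, one argues directly that on $\Omega$ the equation $G(D^2u)=1$ together with the $C^{1,1}$ bound and the geometry of $\partial\Omega$ forces $D^2u\geq 0$; the convexity on the coincidence set $\{u=0, Du=0\}$ is automatic since $D^2u=0$ there a.e.). The cleanest route is: suppose $u$ is not convex, then there is a point and a direction where the smallest eigenvalue of $D^2u$ is negative; rescale to capture this at a free boundary point, pass to a blow-up, and obtain a non-convex global solution, contradicting the classification just established. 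The key input making the blow-up non-trivial (i.e., non-constant) is the non-degeneracy from Proposition \ref{prop_non-deg}, which guarantees $\sup_{\partial B_r}u\geq Cr^2$ and hence prevents the rescalings from collapsing.

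The main obstacle I anticipate is making the blow-up/classification step fully rigorous in the fully nonlinear setting — in particular, (i) ensuring that the limit $u_0$ is again a \emph{global solution to \eqref{eq_main}} (equivalently, solves $G(D^2u_0)=g_0$ for the appropriate limiting source, with the sign conditions \eqref{eq_viscineq1}-\eqref{eq_viscineq2} preserved in the $L^d$-viscosity sense), which requires stability results for $L^d$-viscosity solutions under the scaling, using homogeneity A\ref{assump_homo}; and (ii) proving the classification of global solutions with positive thickness, which is exactly where convexity of $F_1,F_2$ and the Caffarelli-Friedman / monotonicity-formula type arguments would enter — here I would lean on Proposition \ref{prop_thick} and cite \cite[Chapter 5]{Petrosyan-Shahgholian-Uraltseva2012} and \cite{Figalli-Shahgholian_2014} for the half-space classification, checking that the fully nonlinear convex operator $G$ satisfies the hypotheses those references require.
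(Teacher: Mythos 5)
There is a genuine gap, and it sits at the heart of your plan: the ``dichotomy for blow-ups'' (every blow-up at a free boundary point is either a half-space solution or a polynomial solution $P_2$) is not available at this stage. In this paper that dichotomy is essentially the content of Theorem \ref{prop_hspac}, which is proved \emph{after} and \emph{using} Proposition \ref{prop_conv}; and in the references you lean on (\cite{Figalli-Shahgholian_2014}, \cite[Chapter 5]{Petrosyan-Shahgholian-Uraltseva2012}) the convexity of global solutions is itself the key lemma, established first by exactly the kind of argument the paper runs here, and only then fed into the classification. So invoking the classification to prove convexity is circular. Moreover, even if one granted that all blow-ups are convex, your upgrade step does not go through: rescalings $u_r(x)=u(rx)/r^2$ centered at a free boundary point do not ``capture'' a negative pure second derivative occurring at an interior point of $\Omega$ --- under blow-down the point collapses to the origin without any control that the limit inherits $\partial_{ee}u_0<0$, and under blow-up it escapes to infinity. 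Convexity of $u$ is simply not a soft consequence of convexity of its blow-ups, and the alternative you sketch (``the equation plus the $C^{1,1}$ bound forces $D^2u\ge 0$ on $\Omega$'') is not an argument.

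The paper's proof uses a different scaling designed precisely to capture the non-convexity: set $-m:=\inf_{z\in\Omega,\,e\in\mathbb{S}^{d-1}}\partial_{ee}u(z)<0$ (finite by Theorem \ref{teo_main1}), take a minimizing sequence $(y_n,e_n)$, and rescale around $y_n$ at the scale $d_n=\dist(y_n,\partial\Omega)$, subtracting the first-order part. Because each rescaled function solves $F_1=1$ or $F_2=1$ in its (unit-size) domain, Evans--Krylov gives uniform $C^{2,\alpha}$ bounds, so the limit $u_\infty$ satisfies $\partial_{11}u_\infty(0)=-m$ at an interior point. Convexity of the operator then makes $\partial_{11}u_\infty$ a supersolution of the linearized equation, and the strong maximum principle forces $\partial_{11}u_\infty\equiv-m$; a rigidity analysis shows $u_\infty$ is either a second-order polynomial (this is where the thickness hypothesis \eqref{eq_thick} and Proposition \ref{prop_thick} enter, to exclude it) or has $D^2u_\infty=-m\,\Id$, which is incompatible with $F_i(D^2u_\infty)=1$ by ellipticity and $F_i(0)=0$. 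Note also that the thickness condition is used only to rule out the polynomial alternative --- not to sort half-space from polynomial blow-ups as in your sketch (and, as a minor point, for a half-space coincidence set one gets $\delta_1=1$, not $+\infty$). If you want to salvage your outline, you would have to prove the blow-up classification independently of convexity, which is considerably harder than the direct argument above.
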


\begin{proof}
We argue by contradiction and split the proof into a few steps.

\medskip

\noindent{\bf Step 1 - } Suppose that $u$ is not convex and define 
\begin{equation}\label{eq_min}
	-m :=  \inf_{z\in\Omega,  e \in \mathbb{S}^{d-1}}\partial_{ee}u(z)< 0.
\end{equation}
We claim $m>0$ is finite. In fact, because of Theorem \ref{teo_main1}, we have $u\in C ^{1,1}(\mathbb{R}^d)$. Now, consider a minimizing sequence $(y_n,e_n)\subset \Omega\times\mathbb{S}^{d-1}$ for \eqref{eq_min}; that is,
\[
	\partial_{e_ne_n}u(y_n)  \longrightarrow  -m 
\]
as $n\to\infty$. For $x\in B_1$, define the rescaled function 
\[
	u_n(x)  :=  \frac{u(d_nx + y_n)  -  u(y_n)  -  d_nDu(y_n)\cdot x}{d_n^2},	
\] 
where $d_n := \dist(y_n, \partial\Omega)$. Define further
\[
	\Omega_n :=  \frac{\Omega  -  y_n}{d_n}\hspace{.4in}\mbox{and}\hspace{.4in}\ell_n  := -\frac{Du(y_n)}{d_n}.
\]
Since $Du=0$ on $\partial\Omega$, we conclude $Du_n = \ell_n$ on $\partial\Omega_n$.

\medskip

\noindent{\bf Step 2 - }Now, observe that given $y_n \in \Omega$, we either have $y_n \in \Omega^+$ or $y_n \in \Omega^-$. It follows that $z = d_nx + y_n$ also belongs either to $\Omega^+$ or $\Omega^-$. Therefore, we have 
\[
	F_1(D^2u_n) = 1\hspace{.4in}\mbox{or}\hspace{.4in}F_2(D^2u_n) = 1
\]
in $\Omega_n$. In any case $u_n$ is $C^{2,\alpha}$-regular; hence, $\ell_n < C$ for every $n\in\mathbb{N}$ and some $C>0$, universal. As a consequence, we have $\ell_n \to \ell_\infty$, through some subsequence if necessary. 

Without loss of generality, suppose $e_n \to e_1$, as $n\to \infty$, through a subsequence, if required. Since $(u_j)_{n\in\mathbb{N}}$ is uniformly bounded in $C ^{2,\alpha}(B_{1/2})$ there exists $u_\infty \in C ^{2,\beta}(B_{1/2})$ such that $u_n\to u_\infty$ in the $C ^{2,\beta}$-topology, for $0<\beta<\alpha$. Moreover, $\partial_{11}u_\infty(0) = -m$. 

\medskip

\noindent{\bf Step 3 - } Let $G$ be the operator defined in Remark \ref{rem_pow}. Because $G$ is convex, $\partial_{11}u_{\infty}$ is a supersolution of the equation driven by the linear operator $G_{ij}(D^2u_\infty)\partial_{ij}$. Let $\Omega_\infty$ be the connected component containing $B_1$. Since $\partial_{11}u_\infty(z) \geq -m$ in $B_1$, the strong maximum principle yields $\partial_{11}u_\infty \equiv -m$ in $\Omega_\infty$.

Without loss of generality we can assume $Du_{\infty}(x) = 0$ on $\partial\Omega_\infty$; indeed, it follows from an affine transformation of $u_\infty$. For any $e \in \mathbb{S}^{d-1}$ we have $\partial_{ee}u_\infty(z) \geq -m$ in $B_1$; in addition,  the directional Hessian along $e_1$ attains $-m$. We conclude $e_1$ is an eigenvector for $D^2u$ at every point, associated with the smallest eigenvalue. It follows that $\partial_{1j}u_\infty = 0$ along $\partial \Omega_\infty$, for any $j=2, \ldots, d$. Integrating $u_\infty$ in the direction of $e_1$ we deduce
\[
	u_\infty(x)  =  P(x) :=  -m\frac{x_1^2}{2} + ax_1 + b(x') \hspace{0.4cm} \mbox{ in } \hspace{0.2cm}\Omega_\infty,
\]   
where  $x' = (x_2, \ldots,  x_d)$, $a\in\mathbb{R}$ is a fixed constant and $b:\mathbb{R}^{d-1}\to\mathbb{R}$. 

Observe that 
\[
	\frac{\partial}{\partial x_1}P(x) = -mx_1 + a;
\]
hence, $\partial_1 P$ vanishes along the set $\{x_1 = a/m\}$. On the other hand, the fact that $Du_\infty = 0$ on $\partial\Omega_\infty$ yields $\partial_1u_\infty=\partial_1P=0$ on $\partial\Omega_\infty$. As a consequence, we infer $\partial\Omega_\infty \subset \{x_1 = a/m\}$. 

\medskip

\noindent{\bf Step 4 - }At this point we distinguish two cases related to the former inclusion.

\medskip

\noindent{\bf Case 1, $\partial\Omega_\infty \neq \{x_1=a/m\}$ - }It follows that $\mathbb{R}^d\backslash\{x_1=a/m\} \subset \Omega_\infty$, and a further alternative is available, i.e.:
\[
	F_1(D^2u_\infty) = 1\hspace{.4in}\mbox{or}\hspace{.4in}F_2(D^2u_\infty) = 1
\]
almost everywhere in $\mathbb{R}^d$. The Evans-Krylov Theorem applies to $u_r(y) := u_\infty(ry)/r^2$ inside $B_1$ to produce
\[
\sup_{x,z \in B_r}\dfrac{|D^2u_\infty(x) - D^2u_\infty(z)|}{|x-z|^\alpha} \leq \dfrac{C}{r^\alpha}.
\]
Letting $r\to\infty$ we deduce that $D^2u_\infty$ is constant. Hence $u_\infty(x)$ is a second order polynomial.

\bigskip

\noindent{\bf Case 2, $\partial\Omega_\infty = \{x_1 = a/m\}$ - }In this case, we have $D_{x'}P = 0$ on $\{x_1 = a/m\}$ (recall that $Du_\infty = 0$ on $\partial\Omega_\infty$). Thus, $b$ is constant and we obtain
\[
u_\infty(x)  =  -m\dfrac{x_1^2}{2}  +  ax_1  +  b \hspace{0.4cm} \mbox{ in }\hspace{0.4cm} \{x_1>a/m\},
\]
which implies $D^2u_\infty \equiv - m\Id$. Being negative-definite, $D^2u_\infty$ cannot satisfy either $F_1(D^2u_\infty) = 1$ or $F_2(D^2u_\infty) = 1$, which leads to a contradiction.

Therefore, if u is not convex, it has to be a second order polynomial. By combining \eqref{eq_thick} and Proposition \ref{prop_thick} we conclude that $u_\infty$ cannot be a second degree polynomial, obtaining a contradiction and completing the proof.
\end{proof}

\begin{Corollary}
Let $u\in W_{\rm loc}^{2,d}(\mathbb{R}^d)$ be a strong solution to \eqref{eq_main} in $\mathbb{R}^d$. Suppose A\ref{assump_ellipticity}-A\ref{assump_smalldensity} hold and $\{Du\neq 0\}\subset \Omega$. Suppose further \eqref{eq_thick} is in force.
Then $\Omega = \{Du\not=0\}$.
\end{Corollary}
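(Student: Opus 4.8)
The plan is to reduce the statement to the convexity of $u$. By Proposition~\ref{prop_conv}, the thickness hypothesis~\eqref{eq_thick} forces $u$ to be convex on $\mathbb{R}^d$, and by Theorem~\ref{teo_main1} we have $u\in C^{1,1}_{\rm loc}$; hence $Du$ is defined at every point and, by convexity, $Du(x)=0$ if and only if $x$ is a global minimum point of $u$, so $\{Du=0\}=\argmin u$. Since $\{Du\neq 0\}\subset\Omega$ is given, it suffices to prove the reverse inclusion $\Omega\subset\{Du\neq 0\}$, i.e. that no critical point of $u$ lies in $\Omega$.

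I would first record the elementary identifications $\Omega=\Omega^+(u)\cup\Omega^-(u)=\{u\neq 0\}$, so that $\Omega$ is open, $\Omega^c=\Sigma(u)=\{u=0\}$, and $\partial\Omega\subset\Sigma(u)$; the contrapositive of the hypothesis then reads $Du\equiv 0$ on $\Sigma(u)$.

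Assume $\Sigma(u)\neq\emptyset$ and fix $z_0\in\Sigma(u)$. Then $Du(z_0)=0$, so $z_0$ is a global minimum of the convex function $u$; since $u(z_0)=0$, this gives $\min_{\mathbb{R}^d}u=0$. Consequently $u\geq 0$ everywhere and $\argmin u=\{u=0\}=\Sigma(u)$, whence $\{Du=0\}=\argmin u=\Sigma(u)=\Omega^c$, that is, $\{Du\neq 0\}=\Omega$, which is the asserted identity. (The trivial case $u\equiv 0$ is included here, with $\Sigma(u)=\mathbb{R}^d$ and both sides empty.)

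The main obstacle is the degenerate configuration $\Sigma(u)=\emptyset$, in which $u$ has no zeros and hence, by connectedness of $\mathbb{R}^d$, a constant sign. The case $u<0$ everywhere cannot occur: a convex function bounded above on $\mathbb{R}^d$ is constant, and a constant does not solve $F_2(D^2u)=1$ on $\Omega^-(u)=\mathbb{R}^d$. The case $u>0$ everywhere is the configuration with empty free boundary, $\partial\Omega=\emptyset$, which falls outside the intended scope of the statement — for completeness one may observe, via the Evans--Krylov theorem and the blow-down argument of Case~1 in the proof of Proposition~\ref{prop_conv}, that such a $u$ is a quadratic polynomial. Thus, once convexity is supplied by Proposition~\ref{prop_conv}, every remaining step is soft, and the only delicacy is ensuring a genuine free boundary is present, i.e. $\Sigma(u)\neq\emptyset$.
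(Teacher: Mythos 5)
Your proof is correct and takes a genuinely different route from the paper's. The paper also starts from the convexity of $u$ supplied by Proposition \ref{prop_conv}, but then argues measure-theoretically: the critical set $\{Du=0\}$ is convex (being the set of minima of a convex function); since $F_1(D^2u)=1$ a.e.\ in $\Omega^+(u)$ and $F_2(D^2u)=1$ a.e.\ in $\Omega^-(u)$ while $D^2u=0$ a.e.\ on $\{Du=0\}$, the critical points inside $\Omega$ form a null set; a nonempty convex set meeting the open set $\Omega$ only in a null set must lie in a hyperplane, which would force $\Sigma(u)\subset\{Du=0\}$ to satisfy ${\rm MD}(\Sigma(u)\cap B_r(x_0))=0$ and hence violate \eqref{eq_thick}. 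You bypass both the equation and this second use of the thickness condition: reading the hypothesis $\{Du\neq0\}\subset\Omega$ as $Du\equiv 0$ on $\Sigma(u)$, convexity and differentiability give, once $\Sigma(u)\neq\emptyset$, that $\{Du=0\}=\argmin u=\{u=0\}=\Sigma(u)$, i.e.\ $\Omega=\{Du\neq0\}$, with the by-product $u\geq 0$, so $\Omega^-(u)=\emptyset$. Your route is shorter, pinpoints that \eqref{eq_thick} enters only through Proposition \ref{prop_conv}, and your caveat about the degenerate configuration is accurate: if $\partial\Omega=\emptyset$ (for instance $u=|x|^2/(2d)+1$ with $F_1=F_2=\Delta$, where all stated hypotheses, including \eqref{eq_thick}, hold vacuously) the conclusion can fail, so a nonempty free boundary is implicitly assumed -- and the paper's argument needs the same assumption, since its contradiction with \eqref{eq_thick} requires some $x_0\in\partial\Omega$ at which to test the thickness. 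The only minor caution concerns your exclusion of the case $u<0$ everywhere: there too $\partial\Omega=\emptyset$, so citing Proposition \ref{prop_conv} for convexity is formally permitted by its statement but rests on the same vacuous hypothesis; this does not affect the main case.
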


\begin{proof}
Because $u$ is convex, its set of critical points coincide with its set of minima; in addition, the set of minima of a convex function is convex. Hence, $\{Du=0\}$ is convex. Since $F_1(D^2u) = 1 \mbox{ in } \Omega^+$ and $F_2(D^2u) = 1 \mbox{ in } \Omega^-$ we have that $|\Omega\backslash\{Du\neq0\}| = 0$. As a consequence, the convex set $\{Du=0\}$ has measure zero in $\Omega$; if the former is nonempty, it must have co-dimension 1 and, therefore, violates the thickness condition \eqref{eq_thick}. Hence, $\Omega = \{Du\not=0\}$.
\end{proof}

In what follows, we produce the proof of Theorem \ref{prop_hspac}.

\begin{proof}[Proof of Theorem \ref{prop_hspac}.]
For simplicity we suppose that $0\in \partial\Omega$. For $r>0$ define 
\[
	u_r(x) := \frac{u(rx)}{r^2}
\]
and let $u_\infty$ be the limit, up to a sequence if necessary, of $u_r$ as $r\to\infty$. Notice that
\[
\Sigma(u_\infty) = \left\lbrace\Sigma(u):tx \in \Sigma(u) \quad\forall t>0\right\rbrace.
\]  
Next, we prove that $\Sigma(u_\infty)$ is a half-space. As before, we resort to a contradiction argument. Suppose $\Sigma(u_\infty)$ is a not a half-space; then in some system of coordinates, we have
\[
	\Sigma(u_\infty) \subset {C }_{\theta_0} := \{x\in\mathbb{R}^d;x=(\rho\cos\theta, \rho\sin\theta, x_3, \ldots, x_d), \theta_0\leq |\theta|\leq \pi\},
\]
for some $\theta_0>\pi/2$.

Choose $\theta_1 \in (\pi/2, \theta_0)$ and set $\alpha:=\pi/\theta_1$. Then for $\beta >0$ sufficiently large, the function 
\[
	v  :=  r^\alpha(e^{-\beta\sin(\alpha\theta)}  -  e^{-\beta})
\]
is a positive subsolution for the linear operator $G_{ij}(D^2u)\partial_{ij}$ inside $\mathbb{R}^d\backslash{C }_1$,  which vanishes on $\partial{C }_{\theta_1}$. From Proposition \ref{prop_conv} we have that  $u_\infty$ is convex; thus we deduce that $\partial_1u_\infty >0$ in $\mathbb{R}^d\backslash{C }_{\theta_0}$. In addition $\theta_0 > \theta_1$. Hence, by the comparison principle we obtain that
\[
v \leq \partial_1u_{\infty}.
\]
Therefore, $\Sigma(u_\infty)$ is a half-space that happens to be convex. As a consequence, it follows that $\Sigma(u)$ is also a half space.

Finally, we apply global $C^{2,\alpha}$-estimates to $u$ inside the half-ball $B_1\backslash\Sigma(u)$; see, for instance, \cite{Silvestre-Sirakov2014}. We obtain
\[
\sup_{x,z\in B_r\backslash\Sigma(u)}\dfrac{|D^2u(x)-D^2u(z)|}{|x-z|^\alpha} \leq \dfrac{C}{r^\alpha}.
\]
Thus, letting $r\to\infty$ we conclude that $D^2u$ is constant and hence $u$ is a second order polynomial inside the half-space $\mathbb{R}^d\backslash\Sigma(u)$. Recall that $Du=0$ on the hyperplane $\partial\Sigma(u)$, because we have supposed $\{Du\neq0\}\subset\Omega$. Hence, we conclude $u$ is a half-space solution and complete the proof.
\end{proof}

\bigskip

\noindent{\bf Acknowledgements.  }EP is partially funded by FAPERJ (E26/200.002/2018), CNPq-Brazil (\#433623/2018-7 and \#307500/2017-9) and Instituto Serrapilheira (\#1811-25904). MS is partially supported by PUC-Rio Arquimedes Fund. This study was financed in part by the Coordena\c{c}\~ao de Aperfei\c{c}oamento de Pessoal de N\'ivel Superior -- Brasil (CAPES) -- Finance Code 001

\bigskip

\noindent\textsc{Edgard A. Pimentel}\\
Department of Mathematics\\
Pontifical Catholic University of Rio de Janeiro -- PUC-Rio\\
22451-900, G\'avea, Rio de Janeiro-RJ, Brazil\\
\noindent\texttt{pimentel@puc-rio.br}

\bigskip

\noindent\textsc{Makson S. Santos}\\
Center of Investigations in Mathematics (CIMAT)\\
36000 Guanajuato Gto.MEXICO\\
\noindent\texttt{makson.santos@cimat.mx}

\end{document}